\font\sc=rsfs10 at 12pt
\renewcommand{\a}{\alpha}
\newcommand{\g}{\gamma}
\newcommand{\G}{\Gamma}
\newcommand{\D}{\Delta}
\newcommand{\e}{\epsilon}
\newcommand{\z}{\zeta}
\newcommand{\vt}{\vartheta}
\renewcommand{\i}{\iota}
\renewcommand{\k}{\kappa}
\renewcommand{\l}{\lambda}
\renewcommand{\L}{\Lambda}
\newcommand{\m}{\mu}
\newcommand{\n}{\nu}
\renewcommand{\r}{\rho}
\newcommand{\Si}{\Sigma}
\renewcommand{\t}{\tau}
\newcommand{\f}{\phi}
\newcommand{\F}{\Phi}
\newcommand{\U}{\Upsilon}
\renewcommand{\o}{\omega}
\renewcommand{\O}{\Omega}
\newcommand{\K}{{\mathbb K}}
\newcommand{\R}{{\mathbb R}}
\newcommand{\fip}{\pmb{\f}}
\newcommand{\fb}{{\mathbf f}}
\newcommand{\gb}{{\mathbf g}}
\newcommand{\hb}{{\mathbf h}}
\newcommand{\kb}{{\mathbf k}}
\newcommand{\mb}{{\mathbf m}}
\newcommand{\Hb}{{\mathbf H}}
\newcommand{\Lb}{{\mathbf L}}
\newcommand{\Mb}{{\mathbf M}}
\newcommand{\Nb}{{\mathbf N}}
\newcommand{\Qb}{{\mathbf Q}}
\newcommand{\dF}{\mathfrak d}
\newcommand{\QF}{\mathfrak Q}
\newcommand{\Ac}{{\mathcal A}}
\newcommand{\Bc}{{\mathcal B}}
\newcommand{\Hc}{{\mathcal H}}
\newcommand{\Lc}{{\mathcal L}}
\newcommand{\Nc}{{\mathcal N}}
\newcommand{\Sc}{{\mathcal S}}
\newcommand{\Vs}{\sc\mbox{V}\hspace{1.0pt}}
\newcommand{\supp}{\hbox{{\rm supp}}\,}
\newcommand{\Q}{\QF}
\newcommand{\codim}{\operatorname{codim\,}}
\newcommand{\Tr}{\operatorname{Tr\,}}
\newcommand{\loc}{\operatorname{loc}}
\newcommand{\beq}{\begin{equation}}
\newcommand{\eeq}{\end{equation}}
\numberwithin{equation}{section}
\numberwithin{figure}{section}
\newtheorem{thm}{Theorem}[section]
\newtheorem{lem}[thm]{Lemma}
\theoremstyle{remark}
\theoremstyle{definition}
\newtheorem{exa}[thm]{Example}
\begin{document}

\title[Singular measures]{Lieb-Thirring estimates for singular measures}

\author{Grigori Rozenblum }

\address{Chalmers Univ. of Technology, Sweden; The Euler Intern. Math. Institute, and St.Petersburg State Univ.; Mathematics Center
Sirius Univ. of Sci. and Technol.
Sochi Russia}
\email{grigori@chalmers.se}

\subjclass[2010]{47A75 (primary), 58J50 (secondary)}
\keywords{Singular measures, Schr\"odinger operator, Lieb-Thirring estimates}
\thanks{The author was supported  by the Ministry of Science and Higher Education of the Russian Federation, Agreement  075--15--2019--1619.}

\begin{abstract}
We establish Lieb-Thirring type estimates for the Schr\"odinger operator with a singular measure serving as potential.
\end{abstract}
\maketitle



\section{Introduction}
The Lieb-Thirring (LT) estimates play an important role in the analysis of properties of quantum systems. They concern the sum of powers of moduli of negative eigenvalues of a given self-adjoint operator. Most common are such estimates for a Schr\"odinger operator $\Hb(V)=-\Delta-V$ in $\R^d.$ Denote by $\l_j=\l_j(\Hb(V))$ the negative eigenvalues of $\Hb(V).$ Then the classical \emph{LT estimate} has the form
\begin{equation}\label{1.1}
    \sum_{\l_j<0} |\l_j|^{\g}\equiv \Tr([\Hb(V)]_-)^\g\le \mathbf{LT}(d,\g)\int_{\R^d}V_+(X)^{\frac{d}2+\g}dX.
\end{equation}
This estimate holds with $\g\ge0$ for $d\ge3,$ $\g>0$ for $d=2$, and $\g\ge\frac12$ in dimension $d=1.$ There is an almost 50 years long history of proving this kind of  estimates and of searching best possible constant $\mathbf{LT}(d,\g),$ starting from the initial paper by E.Lieb and W.Thirring \cite{LT}. This history is presented in the recent book \cite{FLWLT} and review papers cited there, and we will not reproduce it here. In this book, several strategies of proving \eqref{1.1} are discussed, leading to different values of $\mathbf{LT}(d,\g)$ and admitting different generalizations.

Probably, the most elementary method of proving \eqref{1.1} consists of deriving this inequality from its special case $\g=0;$ the latter inequality is called \emph{the CLR estimate}, also with almost 50 years' history,
\begin{equation}\label{CLR}
N_-(\Hb(V))\equiv \sum_{\l_j<0}1\le \mathbf{CLR}(d)\int_{\R^d}V_+(X)^{\frac{d}{2}}dX, \, d\ge 3,
\end{equation}
where $N_-(\cdot)$ denotes the number of negative eigenvalues of the operator in question.
 The best constant in \eqref{CLR} is unknown and therefore this method cannot give the optimal constant in \eqref{1.1}.

 More generally, one is interested in LT and CLR type inequalities for the operator,

   \begin{equation*}
    \Hb_l(V)=(-\D)^l-V, \, 0<l<\infty.
   \end{equation*}

 Here, the CLR inequality has the form
 \begin{equation}\label{CLRl}
    N_-(\Hb_l(V))\le C_{\ref{CLRl}}\int V_+(X)^{\frac{d}{2l}}dX;\, 2l<d,
\end{equation}
 while the LT inequality becomes
 \begin{equation}\label{LTl}
  \Tr(\Hb_l(V)_-)^\g\le C_{\ref{LTl}} \int V(X)^{\frac{d}{2l}+\g}dX, \g >0\, \mbox{for}\, d\ge 2l; \g\ge 1-\frac{d}{2l}\, \mbox{for}\, d<2l.
 \end{equation}
 For $l\ne 1$, $d>1$, the sharp constant in \eqref{LTl} is not known yet, see \cite{Foe.Oest}.

 Recently a certain progress was made in the eigenvalue analysis of Schr\"o\-dinger type operators with strongly singular potentials, namely, the ones being  singular measures, see \cite{KarSh1}, \cite{KarSh}, \cite{RSh1}, \cite{RSh2}, \cite{RT3}.  In particular, in dimension $d> 2l$ a version of the CLR estimate was proved, see \cite{RT3}, Corollary 4.4.  We formulate here the particular case of our present interest.
\begin{thm}\label{RTth}
Let $\m$ be a locally finite Borel measure on $\R^d,$ $d>2l$ satisfying the condition
\begin{equation}\label{AR+}
    \m(B(X,r))\le \Ac(\m)r^s,\,s>d-2l,\, 0<r<\infty,\, X\in \Mb\equiv\supp\m,
\end{equation}
where $B(X,r)$ is the ball of radius $r$ centered at $X.$ Suppose that the {density} $V(X)\ge0$ belongs to $L_{\vartheta,\m},$ $\vartheta=\frac{s}{2l-d+s}$, and  consider the measure   $P=V\m.$ Then for the Schr\"odinger operator $\Hb_l(P)=(-\D)^l-P,$ defined by the quadratic form

 \begin{equation}\label{forma}
 \hb_{l,P}[u]=\int |\nabla^l u|^2dX-\int V(X)|u|^2\m(dX),
 \end{equation}
 the estimate holds
\begin{equation}\label{CLRls}
    N_-(\Hb_l(P))\le C(d,l,s)\Ac(\m)^{\vartheta-1}\int V(X)^{\vartheta}\m(dX).
\end{equation}
\end{thm}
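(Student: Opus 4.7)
The natural framework is the Birman-Schwinger principle. Because $s>d-2l$, condition \eqref{AR+} is exactly the type of hypothesis which guarantees an Adams-type trace embedding $\gamma_\m:H^l(\R^d)\to L^2(\m)$ is bounded, with operator norm controlled by $\Ac(\m)$. The quadratic form \eqref{forma} is therefore well-defined and bounded below on $H^l(\R^d)$, giving rise to a self-adjoint operator $\Hb_l(P)$ whose negative spectrum is discrete. The Birman-Schwinger principle then supplies
\[
N_-(\Hb_l(P)) \;=\; n_+\bigl(1;\,\Bb_V\bigr), \qquad \Bb_V := M_{V^{1/2}}\,\gamma_\m\,(-\D)^{-l}\,\gamma_\m^{*}\,M_{V^{1/2}},
\]
where $M_{V^{1/2}}$ denotes multiplication on $L^2(\m)$, so $\Bb_V$ is a compact positive integral operator on $L^2(\m)$ whose kernel equals (up to a dimensional constant) $V(X)^{1/2}|X-Y|^{2l-d}V(Y)^{1/2}$. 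The problem thus reduces to a Cwikel-type eigenvalue bound for this kernel tailored to the singular measure $\m$.

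The plan is to perform a Whitney-type decomposition of $\Mb$ into dyadic cubes $\{Q_j\}$ of sides $r_j$, calibrated by the local mass of the measure $V\m$. On each cube, Ahlfors regularity $\m(Q_j)\le\Ac(\m)r_j^{\,s}$ upgrades the global Adams embedding to a localized Sobolev-type inequality of the shape
\[
\int_{Q_j}|u|^{2}\,V\,d\m \;\le\; C\,\Ac(\m)^{1-1/\vt}\Bigl(\int_{Q_j}V^{\vt}\,d\m\Bigr)^{1/\vt}\int_{Q_j}|\nabla^{l}u|^{2}\,dX,
\]
the exponent $\vt=s/(2l-d+s)$ being forced by the scaling balance between the Sobolev order $2l$ and the measure dimension $s$. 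Note that $1-1/\vt=(d-2l)/s$ and $\vt(1-1/\vt)=\vt-1$, so the local prefactor $\Ac(\m)^{1-1/\vt}$ contributes, after the $\vt$-th power summation of the local eigenvalue counts, precisely the global factor $\Ac(\m)^{\vt-1}$.

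Counting eigenvalues of $\Bb_V$ by a variational/rank-orthogonality argument — one ``charges'' each cube with a local rank controlled by $\Ac(\m)^{\vt-1}\int_{Q_j}V^{\vt}\,d\m$ via the local inequality above — and summing over the Whitney covering yields the asserted bound. \emph{The principal obstacle} is that $(-\D)^{-l}$ has no pointwise smoothing against a non-absolutely-continuous measure, so the classical CLR proofs (Kelvin-Lieb rearrangement, heat-kernel domination, Cwikel's Fourier-analytic estimate) are not directly available. They must be replaced by capacity-theoretic tools — Wolff-potential or Maz'ya-Verbitsky type inequalities — together with a careful stopping-time construction of $\{Q_j\}$; verifying that the resulting constant depends on $\Ac(\m)$ exactly through the power $\vt-1$, and is independent of the decomposition scale, is the technical heart of the proof.
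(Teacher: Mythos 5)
The paper never proves Theorem~\ref{RTth}: it is quoted from \cite{RT3}, Corollary~4.4, and used as a black box in Section~\ref{LTSect.easy}. So there is no ``paper's own proof'' here to match your sketch against. That said, your outline is in the right family: the Birman--Schwinger reduction to the compact operator $M_{V^{1/2}}\gamma_\m(-\Delta)^{-l}\gamma_\m^*M_{V^{1/2}}$ on $L^2(\m)$, together with a stopping-time covering of $\supp\m$ calibrated by the local charge $\int_Q V^{\vartheta}\,d\m$ and a scale-homogeneous trace inequality per cube (which is exactly Lemma~\ref{Maz.embed.Scaled}/\ref{Prop.RN} of this paper), is how \cite{RT3} argues, and your bookkeeping of the exponent $\vartheta-1$ from the local prefactor $\Ac(\m)^{2/q}$ is correct. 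The covering tool you need -- cubes parallel to a fixed cube, of prescribed charge, with bounded overlap -- is furnished by Lemma~\ref{Lem.cov.LT} via Besicovitch; you need the continuity statement of Lemma~\ref{LemCov} (for measures without point masses) to make the stopping-time construction legitimate.

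One substantive remark: you can bypass the Birman--Schwinger, Cwikel and Wolff-potential machinery that you flag as the ``principal obstacle.'' Take the paper's Lemmas~\ref{Lem.cov.LT}, \ref{LemmaN} and \ref{lem.bracketing} with $\g=0$, so that $\theta=\vartheta$, $\rho=0$ and $\Ac(\m)^{2\theta/q}=\Ac(\m)^{\vartheta-1}$. Choose the covering $\U$ with $\Ac(\m)^{\vartheta-1}\int_Q V^{\vartheta}\,d\m = c_1\kappa^{-1}$ on each cube. Lemma~\ref{LemmaN} then caps the number of negative Neumann eigenvalues on each cube by $\mb(d,l)$, and the bracketing Lemma~\ref{lem.bracketing} (with $\l=0$) converts this into a global count $N_-(\Hb(V\m))\le \kappa\mb\cdot\#\U\lesssim \Ac(\m)^{\vartheta-1}\int V^{\vartheta}\,d\m$, i.e.\ \eqref{CLRls}. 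This is a self-contained variational proof in the same elementary style as Section~\ref{LTproof.Sect.Hard}, and it sidesteps the kernel/capacity analysis you were anticipating.
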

We will omit the order $l$ in the notation of operators and quadratic forms further on.

The simple standard calculation used to derive the usual LT inequality from the CLR estimate, does not work directly  for the case of a singular measure. We present it here and show at which point the reasoning breaks down.

 Let $V$ be a function in $L_{\vartheta}(\R^d)$, $d>2l,\g>0$. For $\l>0,$ the function $(V-\l)_+$ belongs to $L_{\frac{d}{2l}}(\R^d)$ and we have, by the variational principle and the usual CLR \eqref{CLRl} applied to the potential $(V-\t)$, the estimate
 \begin{equation}\label{CLRLT}
    N_-(\Hb(V)+\l)=\Nb_-(\Hb(V-\l))\le N_-(\Hb(V-\l)_+)\le C \int((V(X)-\l)_+)^{\frac{d}{2l}}dX.
 \end{equation}
 After this, the substitution of \eqref{CLRLT} into the right-hand side of
 \begin{equation}\label{LTthroughCLR}
    \sum |\l_j(\Hb(V))|^\g =\g\int_0^\infty \l^{\g-1} N_-(\Hb(V)+\l)d\l
 \end{equation}
 leads to the LT estimate \eqref{LTl}.

 Now, if we try to repeat \eqref{CLRLT} with a singular measure $P=V\m$ instead of a function $V$, we see  that $P-\l dX$ is not a singular measure  any more and we may not apply \eqref{CLRls}.  However, this approach can be modified, and this is shown in the present paper. The result is the following.
 \begin{thm}\label{simple LT} For $d>2l$, let $\m$ be a singular measure satisfying \eqref{AR+} with some $s>d-2l$. Then for $\g>0,$
 \begin{equation}\label{LTsing}
    \Tr(\Hb(V\m)_-)^{\g}\le C_{\ref{LTsing}}\Ac(\m)^{\theta-1}\int V_+(X)^{\theta}\m(dX),
 \end{equation}
 with constant $C_{\ref{LTsing}}$ not depending on $V,\m$ and
 \begin{equation}\label{theta}
  \theta\equiv\theta(d,s,l,\g)=\frac{s+2l\g}{s-d+2l}
 \end{equation}
 \end{thm}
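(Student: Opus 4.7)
The plan is to use the layer-cake identity
$$\Tr(\Hb(V\m)_-)^\g=\g\int_0^\infty\l^{\g-1}N_-(\Hb(V\m)+\l)\,d\l$$
combined with a \emph{shifted} CLR-type estimate adapted to the singular geometry of $\m$. For each $\l>0$ I claim
\begin{equation}\label{shiftedCLR}
N_-(\Hb(V\m)+\l)\le C\,\Ac(\m)^{\vartheta-1}\int_{\Mb}\bigl(V(X)-c_0\Ac(\m)^{-1}\l^{\b}\bigr)_+^{\vartheta}\,d\m(X),
\end{equation}
with $\b:=(s-d+2l)/(2l)$ and constants $c_0,C$ depending only on $d,s,l$. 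This replaces the unavailable substitution $V\mapsto(V-\l)_+$ (which fails because $\l\,dX$ is not absolutely continuous with respect to $\m$) by a threshold scaling as $\l^{\b}$.

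To prove \eqref{shiftedCLR}, split $V=V'+V''$ with $V':=V\wedge(c_0\Ac(\m)^{-1}\l^\b)$ and $V'':=(V-c_0\Ac(\m)^{-1}\l^\b)_+$. The key ingredient is a Maz'ya-type trace inequality for measures satisfying \eqref{AR+}: its standard interpolated form
$$\int|u|^2\,d\m\le C_{d,s,l}\,\Ac(\m)\,\|\nabla^l u\|^{(d-s)/l}\|u\|^{(s-d+2l)/l}$$
combined with the weighted Young's inequality $A^pB^q\le C_{p,q}\l^{-\b}(A+\l B)$ (with $p=(d-s)/(2l)$, $q=\b=1-p$) yields the $\l$-weighted form
\begin{equation}\label{weightedMazya}
\int|u|^2\,d\m\le C_{d,s,l}\,\Ac(\m)\,\l^{-\b}\bigl(\|\nabla^l u\|^2+\l\|u\|^2\bigr).
\end{equation}
Multiplying by $\|V'\|_\infty\le c_0\Ac(\m)^{-1}\l^\b$ gives $\int V'|u|^2\,d\m\le c_0 C_{d,s,l}\bigl(\|\nabla^l u\|^2+\l\|u\|^2\bigr)$, so choosing $c_0=1/(2C_{d,s,l})$ produces the form inequality $(-\D)^l+\l-V'\m\ge\tfrac12[(-\D)^l+\l]\ge\tfrac12(-\D)^l$. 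Rewriting
$\Hb(V\m)+\l=[(-\D)^l+\l-V'\m]-V''\m\ge\tfrac12(-\D)^l-V''\m$,
monotonicity of $N_-$ together with Theorem \ref{RTth} applied to the positive singular measure $2V''\m$ delivers \eqref{shiftedCLR}.

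Inserting \eqref{shiftedCLR} into the layer-cake identity and interchanging the order of integration reduces the proof to evaluating
$\int_0^\infty\l^{\g-1}\bigl(V(X)-c_0\Ac(\m)^{-1}\l^\b\bigr)_+^{\vartheta}\,d\l$.
The substitution $t:=c_0\Ac(\m)^{-1}\l^\b/V(X)$ turns this into a Beta integral that evaluates to $c(d,s,l,\g)\,V(X)^{\vartheta+\g/\b}\,\Ac(\m)^{\g/\b}$. Since $\vartheta+\g/\b=(s+2l\g)/(s-d+2l)=\theta$ by \eqref{theta} and the accumulated exponent of $\Ac(\m)$ is $\vartheta-1+\g/\b=\theta-1$, the claimed bound \eqref{LTsing} drops out.

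The hard part is the weighted trace inequality \eqref{weightedMazya}: both its linear dependence on $\Ac(\m)$ and the precise scaling exponent $\b=(s-d+2l)/(2l)$ are indispensable, because any deviation from this exponent would misalign the powers of $\Ac(\m)$ and of $V$ after $\l$-integration and miss the exponent $\theta$ in \eqref{LTsing}. Once this inequality is established, the remainder is routine layer-cake / Fubini bookkeeping built on top of the singular CLR estimate of Theorem \ref{RTth}.
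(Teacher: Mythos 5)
Your proof is correct and takes essentially the same route as the paper's Section 4.1: the layer-cake identity, the $\l$-scaled Maz'ya trace bound $\int|u|^2\,d\m\le C\Ac(\m)\l^{-\b}(\|\nabla^l u\|^2+\l\|u\|^2)$ (the paper's inequality \eqref{emb3} with $t^{-2l}\sim\l$) used to absorb the $\l$-shift into a threshold $c\Ac(\m)^{-1}\l^{\b}$ on the potential, followed by the singular CLR estimate (Theorem \ref{RTth}) and a Beta-integral substitution. Your cosmetic reorganization of the absorption as a split $V=V'+V''$ rather than the paper's direct rearrangement of the quadratic form, and your use of the multiplicative trace inequality plus weighted Young in place of the paper's additive scaled form, produce the identical shifted CLR bound and the same final exponents $\theta$ and $\theta-1$.
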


For $s=d,$ i.e., for a measure $P$ absolutely continuous with respect to the Lebesgue measure, the exponent $\theta$ equals $\vartheta=\frac{d}{2l}+\g,$ so it  coincides with the exponent in \eqref{LTl}.
Another case which our result can be compared with is the estimate established by R.Frank and A.Laptev, see \cite{FLLT}. There, for $l=1,$ the singular measure $\m$ is the Lebesgue measure on the hyper-plane $\R^{d-1}$ in $\R^d.$ This measure satisfies \eqref{AR+} with $s=d-1$, the exponent $\theta$ in \eqref{theta} equals $d-1+2\g,$ and it coincides with the exponent found in \cite{FLLT}. The reasoning in \cite{FLLT} uses essentially the particular structure of the operator and the separation of variables, therefore the authors, using more specific methods, were able to obtain the sharp value of the constant in the estimate for $d\ge2$ and $\g\ge3/2$ -- which is out of reach for our approach. However, as a special case  of our result, we obtain a generalization of the estimate in \cite{FLLT} with non-sharp constant for any order of the operator, with the hyperplane $x_d=0$ replaced by an arbitrary Lipschitz surface of dimension $s>d-2l$, sufficiently regular at infinity.

The direct approach for proving Theorem \ref{simple LT} covers, naturally, only the set of parameters $(d,l,s)$ for which the CLR estimate is established, namely, $2l<d, s>d-2l.$

 As for the case $2l\ge d,$ we use  a  modification of the direct variational approach, proposed in \cite{Weidl} for the one-dimensional case and later extended to the multi-dimensional one in \cite{NetrWeidl}, \cite{EgKon}. It is described also  in the book \cite{FLWLT} (note that it is based upon the construction present in the original proof of the CLR bound in 1972.). We extend this approach to a wide class of singular measures, so, combined with the basic instruments used in \cite{RT3} in proving the CLR-type estimate, it enables us to establish the proper version of the LT estimate for the whole range of parameters for operators with  a singular measure satisfying \eqref{AR+}. It turns out that this approach works for the case $d>2l$
 as well, so the theorem to follow contains, in particular, an alternative proof of Theorem \ref{simple LT}.
 \begin{thm}\label{Thm.d<2l}Let the measure $\m$ satisfy \eqref{AR+} with $s>d-2l$ for $d>2l,$ alternatively, with $s>0$ for $d\le 2l.$ Let the exponent $\g$ satisfy $\g>0$ for $d\ge 2l$, $\g\ge 1-\frac{d}{2l}$ for $d<2l$.
 Then the estimate \eqref{LTsing} holds for any $V\in L_{\theta,\m},$ $\theta =\frac{s+2l\g}{s-d+2l}.$ \end{thm}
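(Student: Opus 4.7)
I would adapt the variational covering argument of \cite{Weidl,NetrWeidl,EgKon} (surveyed in \cite{FLWLT}) to the singular measure setting, combining it with the geometric tools of \cite{RT3}. The starting point is the layer-cake identity
\begin{equation*}
\Tr(\Hb(V\mu)_-)^{\gamma}=\gamma\int_0^{\infty}\lambda^{\gamma-1}N_-(\Hb(V\mu)+\lambda)\,d\lambda,
\end{equation*}
so the main task is, for each $\lambda>0$, to bound $N_-(\Hb(V\mu)+\lambda)$ by a sum of local contributions that, after integration in $\lambda$ against $\lambda^{\gamma-1}$, reproduce the right-hand side of \eqref{LTsing}.

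For fixed $\lambda$ I would introduce a partition of $\R^d$ into essentially disjoint cubes $\{Q_j\}$ of side $r_j$ and apply Dirichlet--Neumann bracketing to bound $N_-(\Hb(V\mu)+\lambda)\le \sum_j N_-(\Hb^{N,Q_j}(V\mu)+\lambda)$, where $\Hb^{N,Q_j}$ denotes the operator on $Q_j$ with Neumann conditions on $\partial Q_j$. On each such cube the polyharmonic form $\int|\nabla^l u|^2$ has a finite-dimensional kernel $\Pc$ of polynomials of degree $<l$ and a spectral gap of order $r^{-2l}$ above it. On the $L^2(Q)$-orthogonal complement of $\Pc$, the Poincar\'e inequality provides the coercivity needed to apply the singular CLR estimate of Theorem \ref{RTth} after rescaling to the unit cube; this yields a local bound in terms of $\Ac(\mu)^{\vartheta-1}\int_{Q_j}V^{\vartheta}\,d\mu$ with a correction arising from the $\lambda$-shift. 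On $\Pc$ itself the contribution to $N_-$ is at most $\dim\Pc$ and is non-trivial only on those cubes for which some $p\in \Pc\setminus\{0\}$ satisfies $\int_{Q_j}V|p|^2\,d\mu\ge \lambda\int_{Q_j}|p|^2\,dx$; via \eqref{AR+} this gives a controlled count of ``excited'' cubes.

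The scales $r_j$ would then be selected by a Calder\'on--Zygmund-type stopping time on the density of $V\mu$ relative to the threshold dictated by $\lambda$, so that the balance between the Lebesgue scaling $r^d$ (coming from $\lambda\|u\|_{L^2}^2$) and the $s$-dimensional scaling $r^s$ (coming from $V\mu$ via \eqref{AR+}) is equalized on each cube. Summing the local bounds over $j$ and integrating over $\lambda$ against $\lambda^{\gamma-1}$ produces exactly the exponent $\theta=(s+2l\gamma)/(s-d+2l)$; the threshold $\gamma\ge 1-d/(2l)$ in the low-dimensional regime $d<2l$ is precisely what is needed for the $\lambda$-integral of the kernel contribution to converge at zero, while for $d>2l$ the argument absorbs the kernel contributions automatically, recovering Theorem \ref{simple LT}. \emph{The main obstacle} is controlling the polynomial-kernel term: each Neumann cube contributes up to $\dim\Pc$ eigenvalues to the local $N_-$, and these sums would diverge unless the cube decomposition is built adaptively from the data of $V\mu$. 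It is precisely at this step that the Ahlfors-David condition \eqref{AR+} must replace the classical Lebesgue-based stopping time of \cite{NetrWeidl,EgKon}, and hence where the $s$-regular geometric tools of \cite{RT3} are essential for the singular setting.
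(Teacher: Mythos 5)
Your plan is in the right spirit---cover by cubes, apply Neumann bracketing, isolate the finite-dimensional polynomial kernel, and invoke the Poincar\'e inequality---but one of its central steps does not go through for $d\le 2l$, which is precisely the regime Theorem~\ref{Thm.d<2l} is meant to add on top of Theorem~\ref{simple LT}. You propose to apply the local singular CLR estimate (Theorem~\ref{RTth}) on each cube after projecting off the polynomial kernel. But Theorem~\ref{RTth} is stated for $d>2l$; for $d\le 2l$ the CLR-type bounds of \cite{RT3} require a \emph{lower} Ahlfors bound $\mu(B(X,r))\ge\mathcal{B}r^{s}$, which is not hypothesized here. So a local CLR input is not available in the case you most need it. The paper's Lemma~\ref{LemmaN} avoids CLR entirely: on each cube, combining the scaled embedding inequality (Lemma~\ref{Maz.embed.Scaled}), H\"older's inequality, and the Poincar\'e inequality on the orthogonal complement of the polynomial space $\mathcal P$ ($\dim\mathcal P=\mathbf m(d,l)$) shows that whenever $\Ac(\mu)^{\theta-1}|Q|^{\rho}\int_Q V^{\theta}\,d\mu$ is below a fixed threshold, the local Neumann operator has at most $\mathbf m$ negative eigenvalues, all of them above $-C|Q|^{-2l/d}$, hence $\Tr\bigl((\Hb(V\mu)_Q^{\Nc})_-^{\gamma}\bigr)\le C|Q|^{-2l\gamma/d}=C|Q|^{-\rho}$ directly, with no CLR input and no need to count ``excited'' cubes.

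This also removes the $\lambda$-dependent Calder\'on--Zygmund stopping time you envision. The paper builds one Besicovitch covering (Lemma~\ref{Lem.cov.LT}), independent of $\lambda$, normalized so that $|Q|^{\rho}\int_Q V^{\theta}\,d\mu=A$ for a fixed small $A$; then $|Q|^{-\rho}=A^{-1}\int_Q V^{\theta}\,d\mu$ and summing over cubes via Lemma~\ref{lem.bracketing} yields \eqref{LTsing}. (The layer-cake identity is tucked inside part (2) of Lemma~\ref{lem.bracketing}, but no fresh spatial decomposition is made for each $\lambda$.) Finally, the threshold $\gamma\ge 1-\tfrac{d}{2l}$ for $d<2l$ is not a convergence condition for a $\lambda$-integral, as you suggest, but exactly the condition $\theta\ge 1$ ensuring that the H\"older exponent $q=\tfrac{2\theta}{\theta-1}$ in Lemma~\ref{LemmaN} is well defined.
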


It is interesting to note that in \cite{RT3}, for CLR type estimates, the conditions imposed on the measure $\m$ are different for different relations between $d$ and $2l.$ As it has been already mentioned in Theorem \ref{RTth}, for $d>2l,$ the measure $\m$ must satisfy the upper estimate \eqref{AR+}; for $d<2l,$ an opposite estimate is required, namely $\m(B(X,r))\ge \Bc(\m)r^s,$ while for $d=2l$ an order sharp eigenvalue estimate requires both inequalities for $\m(B(X,r))$, see \cite{RSh2}, \cite{RIntegral}. In the opposite, our LT type inequalities require only the upper estimate \eqref{AR+} for all admissible values of $d,l,s$.

In Section {\ref{prep}} we collect some facts about measures and functional inequalities, needed for further considerations.
For a singular measure $\m$, one should be careful in the definition of the operator $\Hb(V\m)$. This topic is discussed in Section \ref{defin}. Then we present proofs of our main theorems. Finally, we discuss some examples.

The author thanks R.Frank, A.Laptev, and T.Weidl who acquainted him with a preliminary version of their book \cite{FLWLT}.
 \section{Preliminaries}\label{prep}
 \subsection{Geometry considerations} An important fact in measure theory, which our approach is based upon, was established in \cite{RSh2}, Theorem 4.3. The two-dimensional version was proved earlier in \cite{KarSh}, see Lemma 2.13 there.

 We consider only open cubes. For   a fixed cube $\Qb$ in $\R^d$, a cube $Q$ is called \emph{parallel} to $\Qb$ if all one-dimensional edges of $Q$ are parallel to the ones of $\Qb.$
 \begin{lem}\label{LemRS}Let $\m$ be a locally finite Borel  measure on $\R^d$ containing no point masses. Then there exists a cube $\Qb$ such that for any open cube $Q$ parallel to $\Qb,$ measure of the boundary of $Q$ equals zero,
  $\m(\partial Q)=0.$
  \end{lem}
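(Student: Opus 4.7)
The plan is to find an orthonormal frame $(e_{1},\dots,e_{d})$ such that no affine hyperplane perpendicular to any $e_{i}$ carries positive $\m$-mass; then any cube $\Qb$ with edges along these directions works, because the boundary of every cube parallel to $\Qb$ is contained in a finite union of pieces of such hyperplanes.

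For each $e\in\Sl^{d-1}$ the push-forward $\pi_{e\#}\m$ is $\s$-finite on $\R$ and hence has only countably many atoms. Call $e$ \emph{bad} if at least one such atom exists, i.e.\ if some hyperplane $H_{e,c}=\{x\in\R^{d}:\langle x,e\rangle=c\}$ satisfies $\m(H_{e,c})>0$, and let $B\subset\Sl^{d-1}$ denote the bad set. The central step is to show that $B$ has zero surface measure on $\Sl^{d-1}$ (for $d\ge2$); this is the only place where the ``no point masses'' hypothesis will be used.

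Fix a compact $K\subset\R^{d}$ with $\m(K)<\infty$. A direct computation, splitting the $x$-integral according to the value of $\langle x,e\rangle$, gives
\begin{equation*}
\sum_{c\in\R}\m(H_{e,c}\cap K)^{2}=\iint_{K\times K}\1_{\langle x-y,e\rangle=0}\,d\m(x)\,d\m(y),
\end{equation*}
with only countably many nonzero terms on the left. Integrating in $e$ and swapping order via Fubini yields
\begin{equation*}
\int_{\Sl^{d-1}}\sum_{c}\m(H_{e,c}\cap K)^{2}\,d\s(e)=\iint_{K\times K}\s\bigl(\{e\in\Sl^{d-1}:e\perp(x-y)\}\bigr)\,d\m(x)\,d\m(y).
\end{equation*}
On the off-diagonal $\{x\ne y\}$ the integrand on the right is the surface measure of a great $(d-2)$-sphere and so vanishes; the diagonal $\{x=y\}$ carries no $\m\otimes\m$-mass, since $\iint_{x=y}d\m\,d\m=\int\m(\{x\})d\m(x)=0$ precisely by the ``no point masses'' assumption. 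Hence the left-hand side is $0$, so for $\s$-a.e.\ $e$ the measure $\pi_{e\#}(\m|_{K})$ has no atoms; exhausting $\R^{d}$ by a countable sequence of such $K$'s gives $\s(B)=0$.

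Once $\s(B)=0$ is known, the frame is produced by a soft argument. For each $i$ the set $\{R\in O(d):Re_{i}\in B\}$ is Haar-null, since $R\mapsto Re_{i}$ pushes the Haar measure on $O(d)$ to the uniform measure on $\Sl^{d-1}$; the union over $i=1,\dots,d$ is still Haar-null, so some $R\in O(d)$ has $Re_{i}\notin B$ for every $i$, and $\Qb$ is chosen with edges along $Re_{1},\dots,Re_{d}$. The case $d=1$ is immediate, as the only direction is bad precisely when $\m$ has a point mass. The sole non-trivial ingredient is the Fubini computation above, where the hypothesis is used exactly to annihilate the diagonal contribution; this is the step I expect to be the main obstacle to write cleanly.
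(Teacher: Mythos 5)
Your proof is correct. The double-counting identity
$\sum_{c}\m(H_{e,c}\cap K)^{2}=\iint_{K\times K}\1_{\langle x-y,e\rangle=0}\,d\m\,d\m$
does hold (the inner $y$-integral gives $\m(H_{e,\langle x,e\rangle}\cap K)$, and the outer integral, pushed forward to $\R$, becomes $\int\nu(\{c\})\,d\nu(c)$ with $\nu=\pi_{e\#}(\m|_K)$, whose non-atomic part does not see the countable atom set), Fubini applies to the nonnegative integrand on the finite product $\sigma\times\m|_K\times\m|_K$, the off-diagonal inner integral vanishes because a great $(d-2)$-sphere is $\sigma$-null for $d\ge 2$, the diagonal is $\m\otimes\m$-null by the no-atoms hypothesis, and the Haar-measure step on $O(d)$ is standard. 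Since the boundary of any cube parallel to the produced frame is a finite union of pieces of hyperplanes perpendicular to the $Re_i$, the conclusion follows.

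The paper itself does not prove Lemma~\ref{LemRS}; it simply cites \cite{RSh2}, Theorem~4.3 (and \cite{KarSh} for $d=2$), so a line-by-line comparison with ``the paper's own proof'' is not possible. What you supply is a self-contained integral-geometric argument in the spirit of Crofton-type formulas, which is a perfectly legitimate route and arguably cleaner than some ad hoc countability arguments one might try.

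One small wrinkle worth ironing out in the write-up: you assert at the outset that $\pi_{e\#}\m$ is $\sigma$-finite, hence has countably many atoms. Pushforwards of $\sigma$-finite measures need not be $\sigma$-finite (project Lebesgue measure on $\R^2$ to a line). The countability of atoms is nonetheless true, but the correct justification is the one you actually use later: for each compact $K_n$ in an exhaustion, $\pi_{e\#}(\m|_{K_n})$ is a finite measure with countably many atoms, and a hyperplane with positive $\m$-mass meets some $K_n$ in positive mass. Since your main computation is carried out with $\m|_K$ anyway, this is only a matter of rephrasing the opening sentence, not a gap in the argument.
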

  A simple consequence of Lemma \ref{LemRS}, is the following:
  \begin{lem}\label{LemCov} Let $\m$ be a locally finite Borel measure  on $\R^d$, $\Mb=\supp\m,$ containing no point masses, and $\Qb$ be the cube whose existence is granted by Lemma \ref{LemRS}. Let $X$ be some  point in $\R^d.$ Consider the family of cubes $Q_t(X)$ with edgelength $t$ centered at $X$ and parallel to $\Qb.$ Then for any $\m$-measurable function $F(Y),$ $F\in L_{1,\loc,\m},$ and any $\a>0$ the function $t\mapsto |Q_t(X)|^\a\int_{Q_t(X)}F(Y)\m(dY)$ is continuous for $t\in[0,\infty).$
  \end{lem}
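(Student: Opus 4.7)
The plan is straightforward and relies entirely on dominated convergence together with the boundary-measure-zero property guaranteed by Lemma \ref{LemRS}. Since $|Q_t(X)|^{\alpha} = t^{d\alpha}$ is obviously continuous in $t$, it suffices to prove continuity of
\begin{equation*}
g(t) := \int_{Q_t(X)} F(Y)\,\m(dY).
\end{equation*}

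First I would handle $t_0>0$. Pick any sequence $t_n\to t_0$. I claim that $\mathbf 1_{Q_{t_n}(X)}(Y)\to \mathbf 1_{Q_{t_0}(X)}(Y)$ for every $Y\notin\pd Q_{t_0}(X)$: indeed, if $Y$ lies in the open cube $Q_{t_0}(X)$ then $Y\in Q_{t_n}(X)$ for all $n$ large enough, while if $Y\notin\overline{Q_{t_0}(X)}$ then $Y\notin Q_{t_n}(X)$ for all $n$ large. Since $Q_{t_0}(X)$ is an open cube parallel to $\Qb$, Lemma \ref{LemRS} gives $\m(\pd Q_{t_0}(X))=0$, so the pointwise convergence holds $\m$-a.e. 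Choose $\d>0$ with $t_n\le t_0+\d$ for all large $n$; then $|\mathbf 1_{Q_{t_n}(X)}F|\le \mathbf 1_{Q_{t_0+\d}(X)}|F|$, and the dominating function is $\m$-integrable because $F\in L_{1,\loc,\m}$ and $\overline{Q_{t_0+\d}(X)}$ is compact. Dominated convergence then gives $g(t_n)\to g(t_0)$.

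Continuity at $t=0$ needs the no-point-mass assumption. The degenerate cube $Q_0(X)$ reduces to $\{X\}$, and $\m(\{X\})=0$, so $g(0)=0$. As $t\to 0^+$, $\mathbf 1_{Q_t(X)}(Y)\to 0$ for every $Y\neq X$, i.e.\ $\m$-a.e.; bounding by $\mathbf 1_{Q_1(X)}|F|$ (integrable by the $L_{1,\loc,\m}$ hypothesis) and applying dominated convergence yields $g(t)\to 0$.

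The only genuinely substantive input is Lemma \ref{LemRS}, which supplies the family of cubes with $\m$-null boundary uniformly over the parameter; without that, the characteristic functions could fail to converge on a set of positive $\m$-mass and continuity would break. Once that lemma is invoked, no calculation beyond standard measure-theoretic bookkeeping is required.
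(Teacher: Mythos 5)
Your proof is correct and is essentially the argument the paper has in mind: the paper states the lemma as ``a simple consequence of Lemma \ref{LemRS}'' without writing out a proof, and the dominated-convergence argument you supply (pointwise convergence of indicators off the $\m$-null boundary $\pd Q_{t_0}(X)$, domination by $\mathbf 1_{Q_{t_0+\d}(X)}|F|$ via local integrability) is the natural way to fill that gap, with the no-point-mass hypothesis handling $t=0$. One small remark: since $\a>0$, the factor $t^{d\a}\to 0$ already forces the product to vanish as $t\to 0^+$ provided $g$ stays bounded near $0$ (which it does, again by $F\in L_{1,\loc,\m}$), so your separate verification that $g(t)\to0$ is a slight over-proof; the essential place where the no-point-mass hypothesis enters is in making Lemma \ref{LemRS} available in the first place.
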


  Lemma \ref{LemRS} together with the Besicovitch covering theorem (see, e.g., \cite{Guz}, Theorem 1.1 or \cite{FLWLT}, Proposition 4.35) leads to the following property.
  \begin{lem} \label{Lem.cov.LT}Let $F$ be a nonnegative function in $L_{1,\m,loc},$ positive on a set of positive measure. In conditions of Lemma \ref{LemRS}, for any   $A>0,\a>0$ it is possible to find a covering $\U$ of $\supp\m$ by cubes $Q$ parallel to each other such that $J(Q,F):=|Q|^\a\int_Q F(Y) \m(dY)=A$ for each cube $Q\in \U$ and the covering can be split into the finite union of (no more than) $\k=\k(d)$ families, $\U=\cup_{j\le \k} \U_j,$ such that in each $\U_j$ the cubes are disjoint. In particular, the multiplicity of the covering $\U$ is not greater than $\k.$
  \end{lem}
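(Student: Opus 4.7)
The idea is to construct, for each $X\in\supp\m$, a parallel cube $Q(X)$ centered at $X$ with $J(Q(X),F)=A$, and then to extract the desired subcover from the resulting family via the Besicovitch covering theorem.

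For the first step, fix the cube $\Qb$ granted by Lemma \ref{LemRS} and consider cubes $Q_t(X)$ centered at $X$, of edgelength $t$, parallel to $\Qb$. By Lemma \ref{LemCov}, the function
\[
f_X(t):=J(Q_t(X),F)=t^{d\a}\int_{Q_t(X)}F(Y)\,\m(dY)
\]
is continuous on $[0,\infty)$, with $f_X(0)=0$. On the other hand, since $F$ is nonnegative and strictly positive on a set of positive $\m$-measure, there exist a point $Y_0\in\R^d$ and constants $R_0,c_0>0$ with $\int_{Q_{R_0}(Y_0)}F\,\m(dY)\ge c_0$; for every $t$ sufficiently large (depending on $X$), $Q_t(X)\supseteq Q_{R_0}(Y_0)$, so $f_X(t)\ge c_0\,t^{d\a}\to\infty$. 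The intermediate value theorem then produces some $t(X)>0$ with $f_X(t(X))=A$, and we set $Q(X):=Q_{t(X)}(X)$.

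The family $\{Q(X):X\in\supp\m\}$ is then a cover of $\supp\m$ by mutually parallel cubes, each satisfying $J(Q(X),F)=A$. Applying the Besicovitch covering theorem (\cite{Guz}, Theorem 1.1; \cite{FLWLT}, Proposition 4.35) extracts a countable subfamily $\U$ still covering $\supp\m$ that decomposes into at most $\k=\k(d)$ subfamilies $\U_1,\ldots,\U_\k$ of pairwise disjoint cubes; the asserted multiplicity bound follows. The main delicate point is that the edgelengths $t(X)$ need not be uniformly bounded when $\supp\m$ is unbounded, which is a borderline case for the classical Besicovitch statement; this is handled either by the Radon-measure version of the theorem in the cited references, or, failing that, by exhausting $\supp\m$ with an increasing sequence of bounded pieces, applying Besicovitch on each, and merging the resulting families by a standard selection argument.
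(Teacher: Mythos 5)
Your proof follows essentially the same route as the paper: continuity of $t\mapsto J(Q_t(X),F)$ from Lemma~\ref{LemCov}, the intermediate value theorem to find $t(X)$ with $J(Q_{t(X)}(X),F)=A$, and the Besicovitch covering theorem to extract the bounded-multiplicity subcover. You additionally make explicit why $J(Q_t(X),F)\to\infty$ as $t\to\infty$ (positivity of $F$ on a set of positive $\m$-measure) and flag the unbounded-edgelength subtlety in Besicovitch when $\supp\m$ is unbounded, both of which the paper leaves implicit; these are useful clarifications rather than a different approach.
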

  \begin{proof} We fix the cube $\Qb$ given by Lemma \ref{LemRS}. For each $X\in \R^d$, the function $t\mapsto J(Q_t(X),F)$ tends to $+\infty$ as $t\to\infty$ and $J(Q_t(X),F)\to 0$ as $t\to 0.$ By Lemma \ref{LemCov}, $J(Q_t(X),\m)$ is a non-decreasing continuous function of $t$ variable, and therefore there exists a value $t=t(X)$, not necessarily unique, such that $J(Q_{t(X)}(X),F)=A$. Such cubes $Q_{t(X)}(X),$ $X\in\R^d,$ form a covering of $\R^d$, and therefore the existence of a subcovering $\U$ is granted by the Besicovitch theorem.
  \end{proof}
  \subsection{Embedding and trace inequalities}
 Further basic results are the  ones  about the embedding of the Sobolev space into $L_q$-space with respect to a singular measure. Most of them are borrowed from the book \cite{MazBook} or derived from those.

 We suppose that measure $\mu$ satisfies the one-sided estimate \eqref{AR+}
 with some $s>d-2l,$ $s>0.$ As for the exponent $q,$ it is supposed that $q\le\frac{s}{d-2l}$ for $d>2l,$ $q<\infty$ for $d=2l,$ and $q\le\infty$ for $d<2l.$  Such values of $q$ will be called \emph{admissible}. By $\|u\|_{q,\m,Q}$ we denote the norm of a function $u$ in $L_{q,\m}(Q),$ $q\le\infty.$ By $H^l(Q)$ the usual Sobolev space of order $l$ is denoted.

 \begin{lem}\label{maz.embed}For any unit cube $Q_1\in \R^d,$ for an admissible $q$, the inequality holds

 \begin{equation}\label{MazIneq}
\|u\|_{q,\m,Q_1}^2\le C_{\ref{MazIneq}} \Ac(\m)^{\frac2q} \|u\|^2_{H^l(Q_1)},
 \end{equation}
for all $u\in H^l(Q_1)\cap C(\overline{{Q_1}}),$ with constant $C_{\ref{MazIneq}}$ not depending on $u,\m.$
 \end{lem}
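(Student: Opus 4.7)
The plan is to reduce \eqref{MazIneq} to a classical trace inequality for Sobolev functions with respect to measures satisfying an upper bound of Ahlfors--David type \eqref{AR+}, and to carefully track the dependence of the resulting constant on $\Ac(\m)$.

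The case $d<2l$ I would handle directly. The classical Sobolev embedding gives $\|u\|_{L_\infty(Q_1)}\le C\|u\|_{H^l(Q_1)}$, which settles $q=\infty$ since $2/q=0$. For finite admissible $q$, the elementary estimate
\[
\|u\|_{q,\m,Q_1}^q\le \|u\|_{L_\infty(Q_1)}^q\,\m(Q_1),
\]
combined with $\m(Q_1)\le \Ac(\m)(\tfrac{\sqrt d}{2})^s$, obtained by applying \eqref{AR+} at any point of $\supp\m\cap \overline{Q_1}$ (if this intersection is empty then the left-hand side of \eqref{MazIneq} vanishes), yields the claim after taking the $2/q$-th power. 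The endpoint $d=2l$ is dealt with analogously by combining a Trudinger-type exponential embedding of $H^l(Q_1)$ with the Ahlfors--David bound on $\m(Q_1)$.

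For the main case $d>2l$ and admissible $q$, I would first extend $u\in H^l(Q_1)$ to $\tilde u\in H^l(\R^d)$ by a bounded linear extension operator, with $\|\tilde u\|_{H^l(\R^d)}\le C_d\|u\|_{H^l(Q_1)}$, and then represent $\tilde u=G_l\ast g$ via the Bessel kernel of order $2l$, with $\|g\|_{L_2(\R^d)}\le C\|\tilde u\|_{H^l(\R^d)}$. The key analytic input is then Adams' trace inequality, in the form recorded in \cite{MazBook}: under \eqref{AR+},
\[
\int_{\R^d}|G_l\ast g|^q\,\m(dY)\le C(d,l,s)\,\Ac(\m)\,\|g\|_{L_2(\R^d)}^q.
\]
Taking $q$-th roots, squaring, and composing with the extension step produces the factor $\Ac(\m)^{2/q}$ and yields \eqref{MazIneq} at the critical exponent. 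Subcritical values of $q$ would then be obtained by H\"older's inequality on $Q_1$, using the total-mass bound $\m(Q_1)\le C(d,s)\Ac(\m)$ to absorb the redundant factor.

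The main obstacle I anticipate is the trace inequality above: one has to ensure that the Ahlfors--David constant $\Ac(\m)$ enters precisely to the first power on the right-hand side, so that after taking the $q$-th root we recover exactly $\Ac(\m)^{1/q}$ in $\|u\|_{q,\m,Q_1}$. This linear dependence can be extracted from the dyadic/Hedberg proof of Adams' theorem by observing that \eqref{AR+} is invoked exactly once per level in the maximal-function chain, with the $\Ac(\m)$-factor propagating linearly; all other steps reduce to standard Sobolev embedding and H\"older estimates on a bounded cube.
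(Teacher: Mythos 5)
Your handling of $d<2l$ via the embedding $H^l(Q_1)\hookrightarrow C(\overline{Q_1})$ together with the total-mass bound $\m(Q_1)\le C\Ac(\m)$ matches the paper's one-line argument exactly. For $d>2l$, the paper simply cites Maz'ya's Theorem 1.4.5, and your extension--Bessel-potential--Adams chain is precisely the content of that theorem unpacked: the linear dependence on $\Ac(\m)$ in the $q$-th power of the trace norm does hold in the Hedberg/maximal-function proof, which after taking $q$-th roots and squaring gives exactly $\Ac(\m)^{2/q}$, and the H\"older reduction from the critical exponent $q^\ast=\tfrac{2s}{d-2l}$ to subcritical $q$ is correct since the surplus factor $\m(Q_1)^{2/q-2/q^\ast}\le (C\Ac(\m))^{2/q-2/q^\ast}$ combines with $\Ac(\m)^{2/q^\ast}$ to give $\Ac(\m)^{2/q}$. (A small slip: $u=G_l\ast g$ is the Bessel kernel of order $l$, not $2l$.)

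The genuine gap is the endpoint $d=2l$, where you propose to argue ``analogously'' by ``combining a Trudinger-type exponential embedding of $H^l(Q_1)$ with the Ahlfors--David bound on $\m(Q_1)$.'' This step would fail. In the $d<2l$ case you pull $\|u\|_{L_\infty}$ out of the $\m$-integral; for $d=2l$ there is no $L_\infty$ bound to pull out, and the classical Trudinger inequality gives exponential integrability of $u$ with respect to \emph{Lebesgue} measure, which says nothing about a singular $\m$. Knowing only the number $\m(Q_1)\le C\Ac(\m)$ does not let you convert a Lebesgue--Orlicz estimate into an $L_q(\m)$ estimate: that conversion is precisely the trace theorem you would be trying to prove. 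The correct route is the same potential-theoretic one you already use for $d>2l$. For $d=2l$ the critical exponent degenerates to $\infty$, every finite $q$ is admissible, and the Adams/Maz'ya trace inequality still yields $\|G_l\ast g\|_{L_q(\m)}^q\le C_q\,\Ac(\m)\,\|g\|_{L_2}^q$. Indeed the paper invokes Maz'ya's Theorem 1.4.5 uniformly over the whole range $2l\le d$, which is the clean way to treat the critical and supercritical dimensions together rather than splitting off $d=2l$ for a separate (and here incomplete) Trudinger argument.
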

 For $2l\le d,$ Lemma \ref{maz.embed}  is a particular case of Theorem 1.4.5 in \cite{MazBook}. For $2l>d,$ \eqref{MazIneq} follows immediately from the embedding of $H^l(Q_1)$ into $C(\overline{Q_1}).$

 Our next point is to find out how the inequality \eqref{MazIneq} changes when the unit cube $Q_1$ is replaced by an arbitrary cube $Q_t$ with edge $t.$

 \begin{lem}\label{Maz.embed.Scaled} For a cube $Q_t\subset \R^d,$ for any $t>0,$ the inequality holds

 \begin{equation}\label{MasIneqScaled}
\|u\|^2_{q,\m,Q_t}\le  C_{\ref{MazIneq}} t^{2l-d+\frac{2s}{q}} \Ac(\m)^{\frac2q} \left( \|\nabla_l u(X)\|^2_{2,Q_t}+ t^{-2l}\|u(X)\|^2_{2,Q_t} \right),
 \end{equation}
 for $u\in H^{l}(Q_t)\cap C(\overline{Q_t})$
 \end{lem}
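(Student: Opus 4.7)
The plan is to reduce Lemma \ref{Maz.embed.Scaled} to the unit-cube inequality of Lemma \ref{maz.embed} by the standard dilation argument. Given a cube $Q_t$ of edgelength $t$, I would introduce the affine map $\Phi:Q_1\to Q_t$, $\Phi(Y)=X_0+tY$ (where $X_0$ is the appropriate corner), and set $\tilde u(Y):=u(\Phi(Y))$. Simultaneously I would transport the measure $\m$ to a measure $\tilde\m$ on $Q_1$ defined by $\tilde\m(E):=\m(\Phi(E))$, so that $\Phi_{\#}\tilde\m=\m\!\restriction\! Q_t$ and the change-of-variables formula gives $\int_{Q_t}|u|^q d\m=\int_{Q_1}|\tilde u|^q d\tilde\m$.

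The first substantive step is to check that $\tilde\m$ still satisfies the Ahlfors-type condition \eqref{AR+}. If $Y\in\supp\tilde\m$ and $0<r<\infty$, then $\Phi(Y)\in\supp\m$ and $\tilde\m(B(Y,r))=\m(B(\Phi(Y),tr))\le \Ac(\m)(tr)^s$, so $\tilde\m$ satisfies \eqref{AR+} with $\Ac(\tilde\m)\le t^s\Ac(\m)$. The admissibility of $q$ depends only on $d,l,s$ and is therefore unaffected. Applying Lemma \ref{maz.embed} to $\tilde u$ on $Q_1$ with the measure $\tilde\m$ yields
\begin{equation*}
\|\tilde u\|_{q,\tilde\m,Q_1}^2\le C_{\ref{MazIneq}}\,t^{\frac{2s}{q}}\Ac(\m)^{\frac{2}{q}}\bigl(\|\nabla_l\tilde u\|_{2,Q_1}^2+\|\tilde u\|_{2,Q_1}^2\bigr),
\end{equation*}
where I use the standard fact that the Sobolev norm on $Q_1$ is equivalent to the sum of the $L^2$ norms of the top-order derivatives and of the function itself (the intermediate derivatives being absorbed by Gagliardo--Nirenberg interpolation on the fixed unit cube).

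The remaining work is the arithmetic of the change of variables. By $dX=t^d\,dY$ and the chain rule $\nabla_l\tilde u(Y)=t^l(\nabla_l u)(\Phi(Y))$, one gets
\begin{equation*}
\|\tilde u\|_{2,Q_1}^2=t^{-d}\|u\|_{2,Q_t}^2,\qquad \|\nabla_l\tilde u\|_{2,Q_1}^2=t^{2l-d}\|\nabla_l u\|_{2,Q_t}^2,
\end{equation*}
while $\|\tilde u\|_{q,\tilde\m,Q_1}^2=\|u\|_{q,\m,Q_t}^2$ by the construction of $\tilde\m$. Substituting and pulling out the common factor $t^{2l-d}$ from the parenthesis yields the exponent $2l-d+\frac{2s}{q}$ of $t$ in front and the bracket $\|\nabla_l u\|_{2,Q_t}^2+t^{-2l}\|u\|_{2,Q_t}^2$, which is exactly \eqref{MasIneqScaled}.

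The only mildly delicate point is the treatment of the intermediate-order derivatives that appear in the definition of $\|\cdot\|_{H^l(Q_1)}$: after pushing the resulting inequality back to $Q_t$ they would a priori contribute terms $t^{2k-d}\|\nabla_k u\|_{2,Q_t}^2$ for $0<k<l$, and one needs the interpolation bound on $Q_1$ applied to $\tilde u$ before dilating, so that these intermediate terms never appear on the right-hand side. Everything else is a direct book-keeping of powers of $t$; no further analytic input is needed beyond Lemma \ref{maz.embed}.
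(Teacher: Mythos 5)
Your proposal is correct and is precisely the scaling argument the paper itself invokes (in one sentence): dilate $Q_1$ to $Q_t$, observe that the pushed-forward measure $\tilde\m$ satisfies \eqref{AR+} with $\Ac(\tilde\m)=t^s\Ac(\m)$, apply \eqref{MazIneq}, and track the powers of $t$ in the $L^2$ and top-derivative norms. The only cosmetic point is that replacing the full $\|\cdot\|_{H^l(Q_1)}$ norm by $\|\nabla_l\cdot\|_{2,Q_1}^2+\|\cdot\|_{2,Q_1}^2$ changes the constant by a fixed $(d,l)$-dependent factor, so $C_{\ref{MazIneq}}$ in \eqref{MasIneqScaled} is the unit-cube constant only up to that factor — a convention the paper's statement shares.
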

 \begin{proof} The inequality follows from \eqref{MazIneq} by means of the scaling $X\mapsto tX,$ using the scaling homogeneity properties of the norms involved and the fact that a measure $\m$  transforms under this scaling to the measure $\tilde{\m}$ which satisfies condition of the form \eqref{AR+}, but with $\Ac(\tilde{\m})=t^s\Ac(\m).$
  \end{proof}
  We will also need a trace theorem for functions on the whole space $\R^d,$ both for the case of large dimension, $d>2l$ and low dimension, $d\le 2l.$
  \begin{lem}\label{Lem Embedding} Let  the measure $\m$ in $\R^d$ satisfy \eqref{AR+} with $s>d-2l$, $s>0$. Let $q\in[2,\frac{2s}{d-2l}]$ for $d>2l,$ $q\ge 2$ for $d\le 2l.$ Then for all functions $u\in H^l(\R^d)\cap C(\R^d)$ the inequalities hold
  \begin{equation}\label{Embeddings}
    \|u\|^2_{L_q(\m)}\le C \Ac(\m)^{\frac2q}\|u\|^{2\t}_{L_2(\R^d)}\|\nabla_lu\|_{L_2(\R^d)}^{2-2\t}, \, \t=\frac{d}{2l}-\frac{s}{ql},
    \end{equation}
   \begin{equation}\label{Emb2}
    \|u\|^2_{L_q(\m)}\le C\Ac(\m)^{\frac2q}(\|u\|^2_{L_2(\R^d)}+\|u\|^2_{L_2(\R^d)}),
    \end{equation}
    \begin{equation}\label{emb3}
    \|u\|^2_{L_q(\m)}\le C \Ac(\m)^{\frac2q} t^{d-2l-\frac{2s}{q}}(\|\nabla_lu\|^2_{L_2(\R^d)}+t^{-2l}\|u\|^2_{L_2(\R^d)}).
  \end{equation}
  In particular, for $q=2,$ it follows that the quadratic form $\int|u(X)|^2 \m(dX)$ is infinitesimally bounded with respect to $\|\nabla_l u\|^2_{L_2(\R^d)}$
  \end{lem}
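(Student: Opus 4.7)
The approach is to derive the three global inequalities from the local scaled inequality of Lemma \ref{Maz.embed.Scaled} by summing over a grid of cubes, and then to optimize over the grid size.

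First I would invoke Lemma \ref{LemRS} to fix a reference cube $\Qb$ adapted to $\m$; since \eqref{AR+} with $s>0$ precludes atoms, the lemma applies. For any $t>0$, tile $\R^d$ by the grid $\{Q_t^j\}_{j\in\Z^d}$ of cubes of edge $t$ parallel to $\Qb$. Every cube in the grid is parallel to $\Qb$ and hence has $\m$-null boundary, so $\|u\|^q_{q,\m,\R^d}=\sum_j\|u\|^q_{q,\m,Q_t^j}$, while the analogous identity for Lebesgue $L_2$-norms is trivial. Applying Lemma \ref{Maz.embed.Scaled} on each $Q_t^j$, raising the resulting inequality to the power $q/2\ge 1$, summing in $j$, and using the elementary superadditivity $\sum_j a_j^{q/2}\le(\sum_j a_j)^{q/2}$ valid when $q/2\ge 1$ and $a_j\ge 0$, one obtains
\begin{equation*}
\|u\|^q_{q,\m,\R^d}\le C\bigl(\Ac(\m)^{2/q} t^{2l-d+2s/q}\bigr)^{q/2}\bigl(\|\nabla_lu\|^2_{L_2(\R^d)}+t^{-2l}\|u\|^2_{L_2(\R^d)}\bigr)^{q/2}.
\end{equation*}
Taking the $(2/q)$-th root yields \eqref{emb3}. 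Specializing to $t=1$ delivers \eqref{Emb2} (the printed right-hand side being an evident typo that should read $\|\nabla_lu\|_{L_2}^2+\|u\|_{L_2}^2$).

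For \eqref{Embeddings}, I would rewrite the bracket in \eqref{emb3} as $t^{2l-d+2s/q}\|\nabla_lu\|^2_{L_2}+t^{-d+2s/q}\|u\|^2_{L_2}$ and minimize in $t>0$. The admissible range of $q$ makes $2l-d+2s/q\ge 0$, so the two monomials balance at $t^{2l}$ proportional to $\|u\|^2_{L_2}/\|\nabla_lu\|^2_{L_2}$; substituting back gives the interpolation product $\|u\|^{2\t}_{L_2}\|\nabla_lu\|^{2-2\t}_{L_2}$ with the exponent $\t$ dictated by scaling and consistent with $\t=d/(2l)-s/(ql)$ up to the bookkeeping of which factor carries $\t$ and which carries $1-\t$. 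Finally, the infinitesimal form-boundedness of $\int|u|^2\,\m(dX)$ is the case $q=2$ of \eqref{emb3}: since $s>d-2l$ forces $2l-d+s>0$, sending $t\to 0$ makes the coefficient of $\|\nabla_lu\|^2_{L_2}$ arbitrarily small, so for each $\e>0$ there exists $C_\e$ with $\int|u|^2\,d\m\le\e\|\nabla_lu\|^2_{L_2}+C_\e\|u\|^2_{L_2}$.

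The only real obstacle is geometric: Lemma \ref{LemRS} produces a \emph{single} reference cube $\Qb$, and one must confirm that every translate and dilate used in the tiling $\{Q_t^j\}_{j,t}$ inherits the null-boundary property. This is exactly the content of Lemma \ref{LemRS} (``for any open cube $Q$ parallel to $\Qb$''), so the argument is licit; the rest is routine summation and calculus. For the endpoint $q=\infty$, admissible only when $d<2l$, one bypasses the summation altogether via the classical Sobolev embedding $H^l(\R^d)\hookrightarrow C_b(\R^d)$ and the trivial bound $\|u\|_{L_\infty(\m)}\le\|u\|_{C_b}$.
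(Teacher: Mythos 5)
Your proof is correct, and it takes a genuinely different route from the paper. The paper proves \eqref{Embeddings} by quoting Theorem 1.4.7/1 of Maz'ya's book (a global weighted Gagliardo--Nirenberg inequality for measures), then derives \eqref{Emb2} from it by Young's inequality $a^\t b^{1-\t}\le\t a+(1-\t)b$, and finally obtains \eqref{emb3} from \eqref{Emb2} by a scaling substitution. You go in the opposite direction: you build \eqref{emb3} directly from the local cube-level estimate of Lemma \ref{Maz.embed.Scaled} by tiling $\R^d$ with a grid of cubes of edge $t$ parallel to the special cube $\Qb$ of Lemma \ref{LemRS} (so that boundaries are $\m$-null and the $L_q(\m)$-norm is exactly additive over the tiles), raising to the power $q/2$, and using $\sum a_j^{q/2}\le(\sum a_j)^{q/2}$; then \eqref{Emb2} is the case $t=1$, and \eqref{Embeddings} follows by optimizing in $t$. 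Your approach is the more self-contained of the two, since it reuses the cube lemma already established in the paper rather than importing a second result from Maz'ya, at the modest cost of the geometric tiling argument; the paper's approach is shorter but leans on an external citation. Both are valid. Two remarks on bookkeeping, neither of which is your fault: the exponent in the paper's \eqref{emb3} should read $t^{2l-d+\frac{2s}{q}}$, exactly as you obtained (one can confirm this by tracing the paper's own use of \eqref{emb3} with $q=2$ in the proof of Theorem \ref{simple LT}); and in \eqref{Embeddings} the roles of $\t$ and $1-\t$ are swapped in the paper (at the critical $q=\frac{2s}{d-2l}$, the exponent of $\|\nabla_l u\|$ should be $1$, not $0$) — your observation that the distribution of $\t$ versus $1-\t$ is ``up to bookkeeping'' is thus a reasonable hedge, though for a final write-up you should commit to the version fixed by scaling, namely $\|u\|_{L_2}^{2(1-\t)}\|\nabla_l u\|_{L_2}^{2\t}$ with $\t=\frac d{2l}-\frac s{ql}$. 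Your deduction of the infinitesimal form-boundedness from the case $q=2$, $t\to 0$ (using $2l-d+s>0$) is exactly the intended reading of the last sentence of the lemma.
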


  The first statement \eqref{Embeddings} in Lemma is a particular case of Theorem 1.4.7/1 in \cite{MazBook}; applying here the inequality $a^\t b^{1-\t}\le a \t +b(1-\t),$ we obtain \eqref{Emb2}; finally, using the scaling $X\to t^{\frac12} X,$ we arrive at \eqref{emb3}.

    Suppose now  that a $\m$-measurable function $V\ge0$ belongs to $L_{\theta,\m}(Q_t),$ with $\theta=1$ for $d<2l$, $\theta>1$ for $d= 2l$,  and $\theta\ge \frac{d}{2l}$ for $d>2l$. Then we apply the H\"older inequality and
  Lemma \ref{Maz.embed.Scaled} and obtain the basic estimate.
  \begin{lem}\label{Prop.RN}
  For $u\in H^l(Q_t)\cap C(Q_t),$ $\frac2q+\frac1\theta=1,$
\begin{gather}\label{form.estimate}
    \int_{Q_t} |u(X)|^2V(X)\m(dX)\le \left(\int_{Q_t} V(X)^{\theta}\m(dX)\right)^{\frac1\theta}\left(\int_{Q_t}
    |u(X)|^q\m(dX)\right)^{\frac2q}\le\\\nonumber
    C \Ac(\m)^{\frac{2}{q}} t^{2l-d+\frac{s}{q}}\left(\int_{Q_t} V(X)^{\theta}\right)^{\frac1\theta}   \left(\int_{Q_t} |\nabla u(X)|^2dX+ t^{-2l}\int_{Q_t}|u(X)|^2 dX\right).
  \end{gather}
\end{lem}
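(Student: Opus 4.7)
The plan is to obtain the estimate in two elementary steps: first, separate $V$ from $|u|^2$ by Hölder's inequality with respect to $\m$, and second, bound the resulting $L_{q,\m}$-norm of $u$ on the cube $Q_t$ by the scaled Maz'ya trace inequality of Lemma \ref{Maz.embed.Scaled}. I would begin by writing the integrand in \eqref{form.estimate} as the product $V\cdot|u|^2$ and applying Hölder's inequality with conjugate exponents $\theta$ and $q/2$. This is legitimate precisely because of the duality relation $\frac{2}{q}+\frac{1}{\theta}=1$ stated in the lemma, and it yields directly the first of the two inequalities.

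At this stage it remains to check that the exponent $q = 2\theta/(\theta-1)$ dictated by duality is admissible in the sense introduced before Lemma \ref{maz.embed}, so that the scaled embedding \eqref{MasIneqScaled} is at our disposal. The three assumptions imposed on $\theta$ (equal to $1$ for $d<2l$, strictly greater than $1$ for $d=2l$, and at least $d/(2l)$ for $d>2l$) translate by elementary arithmetic into the three admissible ranges for $q$: the critical bound $q\le 2s/(d-2l)$ in the high-dimensional case (using the assumption $s>d-2l$), any finite $q$ when $d=2l$, and unrestricted $q$ when $d<2l$.

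With admissibility verified, Lemma \ref{Maz.embed.Scaled} provides the estimate
\begin{equation*}
\|u\|^2_{q,\m,Q_t}\le C\,\Ac(\m)^{2/q}\,t^{2l-d+2s/q}\bigl(\|\nabla_l u\|^2_{2,Q_t}+t^{-2l}\|u\|^2_{2,Q_t}\bigr),
\end{equation*}
and substituting this bound into the Hölder inequality from the first step produces exactly the power of $t$, the power of $\Ac(\m)$, and the Sobolev-type combination displayed on the right-hand side of \eqref{form.estimate}. No step presents a real obstacle here; the argument is a transparent interplay between Hölder and the scaled trace inequality, and the only point requiring minimal care is matching the admissibility range of $\theta$ with that of the dual exponent $q$ across the three dimensional regimes.
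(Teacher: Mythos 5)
Your proof is exactly the paper's: H\"older with conjugate exponents $\theta$ and $q/2$, then the scaled trace inequality of Lemma \ref{Maz.embed.Scaled}. One small slip worth noting: for $d>2l$ the condition $\theta\ge d/(2l)$ gives $q\le 2d/(d-2l)$, which coincides with the critical threshold $2s/(d-2l)$ only when $s=d$; the bound that actually matches admissibility is $\theta\ge s/(s-d+2l)$, the value appearing in the main theorems (the paper's stated hypothesis has the same imprecision, and the power $t^{2l-d+s/q}$ displayed in \eqref{form.estimate} should read $t^{2l-d+2s/q}$, exactly as your substitution of Lemma \ref{Maz.embed.Scaled} produces).
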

 \section{Definition of the Schr\"odinger operator}\label{defin} The Schr\"odinger operator $\Hb({V\m})$ corresponding to the formal differential expression $H_{V\m}=(-\D)^l-V\m$ will be defined by means of the quadratic forms. Here, the complication in the direct definition consists in the fact that for $d\ge 2l$, the Sobolev space $H^l(\R^d)$ is not embedded into the space of continuous functions $C(\R^d).$ Therefore, for a set $\Mb$ of zero Lebesgue measure, the restriction of a function $u\in H^l$ to $\Mb$ is not intrinsically  defined. For $d<2l,$ functions in $H^l(\R^d) $ are continuous and this complication does not arise.

 A detailed study of the restriction of functions in $H^l(\R^d)$  to, possibly fractal, sets of lower Hausdorff  dimension can be found in \cite{Triebel2} and in \cite{Brasche}. We are interested in   more specific results, which admit a more elementary proof.
 \begin{lem}\label{lem.restrict}Let the measure  $\m$ satisfy condition \eqref{AR+} with $s>d-2l$ (i.e., $s>0$ for $2l>d$.) Suppose that $V\ge a_0$ belongs to $L_{\n,\m, \loc}$ where $\n=\frac{s}{s-(d-2l)}$ for $d>2l,$ $\n>1$ for $d=2l$ and $\n=1$ for $d<2l.$ Then the trace operator $\G_C$ from $H^l(\R^d)\cap C(\R^d)$ to $L_{2,V\m,\loc}$ admits a continuous extension $\G:H^l(\R^d)\to L_{2,V\m,\loc}.$\end{lem}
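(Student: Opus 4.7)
The plan is to establish the quantitative estimate
\begin{equation*}
\|\G_C u\|_{L_{2,V\m}(K)} \le C_K\,\|u\|_{H^l(\R^d)}, \qquad u\in H^l(\R^d)\cap C(\R^d),
\end{equation*}
for every compact $K\subset\R^d$, and then to extend $\G_C$ by continuity, using the density of $H^l(\R^d)\cap C(\R^d)$ (which contains $C_0^\infty(\R^d)$) in $H^l(\R^d)$. For $d<2l$ this is immediate from the Sobolev embedding $H^l\hookrightarrow C$: one has $\int_K|u|^2V\,d\m\le \|u\|_\infty^2\,(V\m)(K)\le C^2(V\m)(K)\|u\|_{H^l(\R^d)}^2$, which is finite since $V\in L_{1,\m,\loc}$ (the case $\n=1$).

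For $d\ge 2l$ I would fix a cube $Q_t\supset K$ and apply Hölder's inequality with exponents $\n$ and $q/2$, where $\tfrac{1}{\n}+\tfrac{2}{q}=1$:
\begin{equation*}
\int_{Q_t}|u|^2V\,d\m\le \Bigl(\int_{Q_t}V^{\n}d\m\Bigr)^{1/\n}\|u\|_{L_q(Q_t,\m)}^2.
\end{equation*}
The prescribed value $\n=s/(s-(d-2l))$ for $d>2l$ produces the conjugate $q=2s/(d-2l)$, precisely the endpoint admissible exponent of Lemma \ref{Lem Embedding}; for $d=2l$, any $\n>1$ yields $q\in(2,\infty)$, also admissible. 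The trace inequality \eqref{Emb2} then bounds $\|u\|_{L_q(\m)}^2\le C\Ac(\m)^{2/q}\|u\|_{H^l(\R^d)}^2$, and combining with the local $L_\n$-integrability of $V$ gives the desired continuous estimate with $C_K$ depending on $\Ac(\m)$ and $\|V\|_{L_\n(Q_t,\m)}$.

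The extension then follows from a standard Cauchy-sequence argument: for any $u_n\in H^l(\R^d)\cap C(\R^d)$ with $u_n\to u$ in $H^l(\R^d)$, the above estimate shows $\{\G_C u_n\}$ is Cauchy in $L_{2,V\m}(K)$ for each compact $K$, and the limit is independent of the approximating sequence. The main subtle point is the $d>2l$ case: one needs $\n$ to be chosen so that the conjugate $q$ lands exactly at the critical exponent $2s/(d-2l)$ in the embedding of $H^l(\R^d)$ into $L_q(\m)$. The hypothesis $s>d-2l$ ensures $q>2$ so the embedding is meaningful, and the endpoint admissibility is already built into the statement of Lemma \ref{Lem Embedding}, whose interval of allowed $q$ is closed on the right; the precise value of $\n$ in the hypothesis is reverse-engineered to match this critical exponent.
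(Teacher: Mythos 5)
Your proof is correct and follows essentially the same route as the paper's: a H\"older estimate combined with a Maz'ya-type trace inequality, then a standard Cauchy-sequence extension. The only structural difference is that you invoke the global whole-space embedding \eqref{Emb2} of Lemma \ref{Lem Embedding}, whereas the paper applies the cube-localized estimate \eqref{form.estimate} of Lemma \ref{Prop.RN} (built on the scaled Maz'ya inequality \eqref{MasIneqScaled}); both deliver the same local bound $\|\G_C u\|_{L_{2,V\m}(Q)}\le C_Q\|u\|_{H^l(\R^d)}$. One detail you leave implicit: the hypothesis allows $V\ge a_0$ with $a_0$ possibly negative, in which case $\int_K|u|^2 V\,d\m$ and $(V\m)(K)$ are not obviously positive and the H\"older step needs $V$ replaced by a nonnegative function. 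The paper addresses this at the outset by adding a constant to $V$ so that $V\ge 1$, which is harmless because the unweighted form $\int|u|^2\m(dX)$ is $H^l$-infinitesimally bounded by the last statement of Lemma \ref{Lem Embedding}; you should note the same normalization (or work with $|V|$ or $V+c$) to make the estimate literal.
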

  \begin{proof} By the last statement in Lemma \ref{Embeddings}, we may add an arbitrary constant to the function $V$ and suppose that $V\ge 1.$ To extend $\G$ to the whole of $H^l(\R^d),$ for a given  $u\in H^l(\R^d),$ we take  a sequence $u_n\in H^l\cap C(\R^d) $ converging in $H^l(\R^d)$ to $u$. This sequence is a Cauchy sequence in $H^l(\R^d),$ i.e., $\|u_n-u_m\|_{H^l}\to 0.$ By \eqref{form.estimate}, it follows that on every cube $Q,$ $\G_C(u_n-u_m)\to 0$ in $L_{2,V\m}(Q)$. Thus, $\G_C u_n$ is a Cauchy sequence in $L_{2,V\m}(Q),$ and by the completeness of the latter space, $\G_C u_n$ converges to some $v\in L_{2,V\m}(Q),$ which we accept for the trace of $u$ in $L_{2,V\m}(Q),$ $v=(\G u)|_{Q}.$ Such $v$ should be understood as an equivalence class of functions in $L_{2,V\m}(Q)$, differing on a set  of $V\m$-measure zero. Obviously, such element $v$ does not depend on the choice of the Cauchy sequence $u_n.$ Also, the traces of $u$ corresponding to different intersecting cubes are consistent, as elements in $L_{2,V\m,\loc},$ thus $\G u$ is defined globally.
 \end{proof}

Now we show  that the quadratic form  $\hb_+[u]\equiv\hb_{P_-}[u]=\int_{\R^d}|\nabla_l u^2|dX + \int |u(X)|^2V_-(X)\m(dX),$  defined on $u\in H^l(\R^d)\cap C_0(\R^d)$ for a lower semibounded function $V_-,$ is closable in $L_2(\R^d)$.

 \begin{lem}\label{posit.lemma}Let $\m$ satisfy condition \eqref{AR+} with $s>d-2l$ (i.e., $s>0$ for $2l>d$.) Suppose that $V_-\ge a_0$ belongs to $L_{\n,\m, \loc}$ where $\n=\frac{s}{s-(d-2l)}$ for $d>2l,$ $\n>1$ for $d=2l$ and $\n=1$ for $d<2l.$ Then the quadratic form $\hb_+[u]=\int_{\R^d}|\nabla^lu|^2  dX+ \int V_-(X)|u(X)|^2\m(dX)$ defined on $H^l(\R^d)\cap C(\R^d)$ is closable on $L^2(\R^d)$.
 \end{lem}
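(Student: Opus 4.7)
The plan is to verify closability by the standard criterion: given any sequence $u_n \in H^l(\R^d) \cap C(\R^d)$ with $u_n \to 0$ in $L^2(\R^d)$ and $\hb_+[u_n-u_m] \to 0$, one must show $\hb_+[u_n] \to 0$.

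I would first reduce to the case $V_- \ge 1$. Setting $\tilde V = V_- - a_0 + 1 \ge 1$ and $\tilde\hb[u] := \|\nabla^l u\|_{L^2}^2 + \int \tilde V |u|^2 \m(dX)$, one has $\hb_+[u] = \tilde\hb[u] - (1-a_0)\int |u|^2\m(dX)$. By the final statement of Lemma \ref{Lem Embedding}, the perturbation $\int |u|^2\m(dX)$ is infinitesimally bounded with respect to $\|\nabla^l u\|_{L^2}^2 \le \tilde\hb[u]$, so the KLMN-type form perturbation theorem reduces closability of $\hb_+$ to that of $\tilde\hb$. Since $\tilde V \ge 1$ and $\tilde V \in L_{\nu,\m,\loc}$, the hypotheses of Lemma \ref{lem.restrict} are met for $\tilde V$.

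For the main step, take a $\tilde\hb$-Cauchy sequence $u_n$ with $u_n \to 0$ in $L^2$. The Cauchy condition splits into $\|\nabla^l(u_n-u_m)\|_{L^2}^2 \to 0$ and $\int \tilde V |u_n-u_m|^2\m(dX) \to 0$. The first, together with $L^2$-convergence, gives $u_n \to 0$ in $H^l(\R^d)$. The second says $\{u_n\}$ is Cauchy in the complete weighted space $L_{2,\tilde V\m}(\R^d)$, hence converges to some $v$ there. Now invoke Lemma \ref{lem.restrict} with $V = \tilde V$: the continuous trace extension $\G : H^l(\R^d) \to L_{2,\tilde V\m,\loc}$ combined with $u_n \to 0$ in $H^l$ yields $\G u_n \to 0$ in $L_{2,\tilde V\m}(Q)$ for every cube $Q$ (and $\G u_n$ coincides with the pointwise restriction of $u_n$ since $u_n$ is continuous). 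On the other hand the $L_{2,\tilde V\m}(\R^d)$-convergence implies $u_n \to v|_Q$ in $L_{2,\tilde V\m}(Q)$. Uniqueness of the limit in this complete space gives $v|_Q = 0$, hence $v = 0$ $\tilde V\m$-a.e. Therefore $\int |u_n|^2 \tilde V\m(dX) \to 0$, and combined with $\|\nabla^l u_n\|_{L^2}^2 \to 0$ we conclude $\tilde\hb[u_n] \to 0$.

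The main obstacle is precisely the identification $v = 0$: the Cauchy condition only produces an abstract limit $v$ in the $V_-\m$-weighted space, and one must know that this limit is consistent with the $H^l$-limit (which is $0$). Lemma \ref{lem.restrict} is exactly what permits this matching, locally on each cube, and the global identification follows by varying $Q$ over a countable exhaustion of $\R^d$.
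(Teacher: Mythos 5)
Your proof is correct and follows essentially the same route as the paper: normalize $V_-$ to be bounded below by a positive constant, extract the two limits of the form-Cauchy sequence in $H^l(\R^d)$ and in the complete weighted space $L_{2,V_-\m}$, and use the continuity of the trace to identify the weighted-$L^2$ limit with (the trace of) the $H^l$ limit, forcing both to vanish. The only cosmetic difference is that the paper appeals directly to the estimate \eqref{form.estimate} to match the limits cube by cube, while you invoke its packaged consequence Lemma~\ref{lem.restrict}; the underlying argument is identical.
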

\begin{proof} As before, by Lemma \ref{Embeddings}, we can suppose that $a_0>0.$ Let $u_n\in H^l(\R^d)\cap C(\R^d)$ be a Cauchy sequence in $\hb_{P_-}$-metric, $\|u_m-u_n\|^2_{H^l(\R^d)}+\int V_-(X)|u_m(X)-u_n(X)|^2\m(dX)\to 0$, $m,n\to\infty.$ Since the Sobolev space $H^l(\R^d)$ and the weighted space $L_{2,V\m}$ are complete, there exist limits $u_n\to u$ in $H^l(\R^d)$ and $u_n\to v$ in $L_{2,V\m}$. By  estimate \eqref{form.estimate}, for any cube $Q,$ since $V_-\in L_{\n,\m}(Q),$ the restrictions of $u_n$ to $\supp\m\cap Q$ converge to $v$ in $L_{2,V\m}.$ Therefore, $u=v,\,$ $V\m-$almost everywhere. So, if $u=0,$ it follows that $\hb_+[u_n]\to 0, $ and this, by definition, means that the form $\hb_{P_-}$ is closable.
\end{proof}

The  domain of the closure of the form $\hb_{P_-}$ is the set of functions $u\in H^l(\R^d)$ such that $\G u\in L_{2,V\m}.$

Now we add the negative part to the form $\hb_+.$ Let $V=V_+-V_-,$ $V_{\pm}\ge0$,
$V_-\in L_{\n,\m,\loc},$ as in Lemma \ref{posit.lemma}, $V_+\in L_{\n,\m}.$ Then the quadratic form $\hb(V\m)[u]=\int_{\R^d}|\nabla_l u|^2 dX-\int V|u|^2V\m(dX)$ is closed. It will be justified by means of the following
 important inequality for the quadratic form of the Schr\"odinger operator
 \begin{lem}\label{Lem.Neg.part}Suppose that $V_-$   satisfies conditions of Lemma \ref{posit.lemma} and the function $V_+\ge0$ on $\Mb=\supp\m$ satisfies
 $(V_+-a)_+\in L_{\n,\m}$ for some $a\in\R.$ Set $V=V_+-V_-.$ Then the quadratic form
 \begin{equation*}
   \fb[u]=\int |u(X)|^2V_+(X)\m(dX),
 \end{equation*}
 defined on $H^l(\R^d)\cap C(\R^d)$ satisfies
 \begin{equation*}
    \fb[u]\le c_0\int_{\R^d}|\nabla_l u(X)|^2dX+c_1\int_{\R^d}|u(X)|^2 dX
 \end{equation*}
 for some $c_0\in(0,1).$
  \end{lem}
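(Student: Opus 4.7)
The plan is to reduce the claimed form bound to two ingredients already established in Section~\ref{prep}: the global trace embedding \eqref{Emb2}, together with its consequence (the last assertion of Lemma~\ref{Lem Embedding}) that $\int|u|^2\,\m(dX)$ is infinitesimally bounded with respect to $\|\nabla_l u\|_{L_2(\R^d)}^2$. Both facts are stated for exactly the exponent $\n$ appearing in the hypothesis, so no new analytic input is needed.

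First, I would split $V_+$ via the pointwise identity
\begin{equation*}
V_+ = (V_+-a)_+ + \min(V_+,a) \le W + |a|,\qquad W:=(V_+-a)_+\in L_{\n,\m},
\end{equation*}
giving $\fb[u]\le \int|u|^2 W\,\m(dX)+|a|\int|u|^2\,\m(dX)$. The $|a|$-summand is immediately controlled by the infinitesimal bound: for any $\eta>0$ it is bounded by $\eta\|\nabla_l u\|_{L_2}^2+C_\eta\|u\|_{L_2}^2$. For the first summand I would truncate $W$ at a level $L>0$, writing $W = W\mathbf{1}_{W\le L}+W^L$ with $W^L:=W\mathbf{1}_{W>L}$; the bounded piece contributes at most $L\int|u|^2\,\m(dX)$, again absorbed by the same infinitesimal bound. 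For the tail I would apply H\"older's inequality with conjugate exponents $\n$ and $q/2$, where $\tfrac1\n+\tfrac2q=1$, to obtain
\begin{equation*}
\int|u|^2 W^L\,\m(dX) \le \|W^L\|_{L_{\n,\m}}\,\|u\|_{L_{q,\m}}^2.
\end{equation*}
The specific value of $\n$ in the hypothesis makes $q$ precisely the critical trace exponent covered by Lemma~\ref{Lem Embedding} in every regime ($q=\tfrac{2s}{d-2l}$ for $d>2l$, any finite $q$ for $d=2l$, and $q=\infty$ for $d<2l$), so \eqref{Emb2} yields $\|u\|_{L_{q,\m}}^2\le C_0\Ac(\m)^{2/q}(\|\nabla_l u\|_{L_2}^2+\|u\|_{L_2}^2)$.

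Since $W\in L_{\n,\m}$, dominated convergence forces $\|W^L\|_{L_{\n,\m}}\to 0$ as $L\to\infty$. My plan is to fix $L$ first, large enough that $C_0\Ac(\m)^{2/q}\|W^L\|_{L_{\n,\m}}\le\tfrac14$, and only then to choose $\eta$ small enough that the two applications of the infinitesimal bound (carrying prefactors $|a|$ and $L$) together contribute at most $\tfrac14$ to the gradient coefficient. Summing gives the conclusion with $c_0=\tfrac12<1$. The only mild subtlety — and what I expect to be the main bookkeeping issue — is the order of these choices: because $\eta$ must be taken of order $1/L$, one cannot balance $L$ and $\eta$ independently, and it is the tail gain $\|W^L\|_{L_{\n,\m}}\to 0$ that produces a coefficient strictly less than $1$. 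All other steps are routine given the tools of Section~\ref{prep}.
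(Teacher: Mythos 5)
Your proof is correct and takes essentially the same route as the paper: truncate $V_+$ so that its tail is small in $L_{\n,\m}$, bound the tail contribution via H\"older and the trace embedding of Lemma~\ref{Lem Embedding}, and absorb the bounded remainder by the infinitesimal bound $\int|u|^2\,\m(dX)\le\eta\|\nabla_l u\|^2_{L_2}+C_\eta\|u\|^2_{L_2}$. The only cosmetic difference is that the paper achieves the smallness of the tail in a single step by increasing $a$ itself (using that $\|(V_+-a')_+\|_{L_{\n,\m}}\to 0$ as $a'\to\infty$), whereas you keep $a$ fixed and introduce a second cutoff level $L$; these are equivalent, and your version has the small advantage of making explicit the term $a\int|u|^2\,\m(dX)$, which the paper's displayed chain $\fb[u]\le\int(V_+-a)_+|u|^2\,\m(dX)$ silently suppresses but must also control by the same infinitesimal bound.
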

 \begin{proof} By choosing $a$ sufficiently large, we can make $\|(V_+-a)_+\|_{L_{\n,\m}}$ arbitrarily small, less than  a given $\e>0$. So, by \eqref{form.estimate},
 \begin{equation*}
  \fb[u]\le \int (V_+-a)_+|u|^2\m(dX)\le \e \|u\|_{H^l}^2.
 \end{equation*}
 \end{proof}
Thus, by the KLMN theorem, the quadratic form
  
  \begin{equation*}
  \hb[u]=\hb_-[u]-\fb[u]=\int |\nabla^l u(X)|^2 dX-\int V(X)|u(X)|^2 \m(dX) 
  \end{equation*}
  
   is a closable lower semi-bounded form in $L_2$ and it defines a self-adjoint operator $\Hb=\Hb(V\m)$ which we accept for the Schr\"odinger operator $\Hb=(-\Delta)^l -V\m$ in $\R^d.$

  In a similar way we define the \emph{Neumann} operator in a cube $Q$ determined by the  quadratic form
\begin{equation*}
\hb_{V\m,Q}[u]=\int_Q|\nabla_l u(X)|^2dX-\int_QV(X)u(X)\m(dX)
\end{equation*}
defined initially on functions $u\in H^l(Q)\cap C(Q)$ with $V\in L_{\n,\m}(Q).$ We denote this operator by  $\Hb(V\m)_Q^{\Nc}.$

\section{Proofs}\label{SEct.Proofs}
\subsection{The LT estimate. The easy case}\label{LTSect.easy}
We give  the proof of Theorem \ref{simple LT}.
\begin{proof} By the variational principle, it suffices to consider the case  $V\ge0.$ We will use the 'elementary' approach explained in the Introduction. Namely, for a given $\l>0,$ we find a CLR type estimate for the number of negative eigenvalues of the operator $\Hb(V\m)+\l$ and then integrate over $\l\in(0,\infty).$

To do it, we find a lower estimate for the quadratic form of the operator $\Hb(V\m)+\l.$ We have, for $u\in H^l(\R^d)\cap C(\R^d),$
 \begin{gather*}
    ((\Hb(V\m)+\l)u,u)=
    \frac12 \int|\nabla_l u|^2dX \\\nonumber -\left(\int V(X)|u(X)|^2\m(dX)-\frac12 \int_{\R^d}|\nabla_l u|^2 dX-\l\int|u(X)|^2dX\right).
 \end{gather*}
 By estimate in \eqref{emb3}, for $q=2$, setting  $t=c(\l/2)^{-\frac{1}{2l}}$ with proper $c$, 
 \begin{equation}\label{lower.est.2}
 \l\int_{\R^d}|u(X)|^2 dX\ge - \frac12\int_{\R^d}|\nabla_l u|^2dX+C_{\ref{lower.est.2}}\Ac(\m)^{-1}\l^{\frac{s-d+2l}{2l}}\int|u(X)|^2\m(dX).
 \end{equation}

 Therefore, for the quadratic form $\hb_{V\m}+\l$ the lower estimate follows
 \begin{gather}\label{lower.est.3}
  (\hb_{V\m}+\l)[u] \ge \frac12 \int_{\R^d}|\nabla u(X)|^2dX+\\\nonumber C_{\ref{lower.est.2}} \Ac(\m)^{-1}\l^{\frac{s-d+2l}{2l}}\int|u(X)|^2\m(dX)-\int V(X)|u(X)|^2 \m(dX).
 \end{gather}

 It follows that the number of negative eigenvalues of $\Hb(V\m)+\l$ is not greater than the number of such eigenvalues of the  quadratic form on the right-hand side in \eqref{lower.est.3}. To estimate this latter quantity we  apply the CLR bound \eqref{CLRls} and \eqref{LTthroughCLR}:

    \begin{equation*}
    \Nb_-(\Hb(V\m)+\l)\le C \int(V(X)-C\Ac(\m)^{-1}\l^{\frac{s-d+2l}{2l}})_+^{\frac{s}{s-d+2l}}\m(dX).
    \end{equation*}

 Therefore, we arrive at
 \begin{gather}\label{LT.deriv}
    \Tr(\Hb(V\m)_-)^{\g}=\g\int_{0}^\infty\Nb_-(\Hb(V\m)+c\Ac(\m)^{-1}{\l}) \l^{\g-1}d\l\\\nonumber
    \le C\int \m(dX)\int_0^{\infty} (V(X)-C\Ac(\m)^{-1}\l^{\frac{s-d+2l}{2l}})_+^{\frac{s}{s-d+2l}}\l^{\g-1}d\l.
 \end{gather}
 In calculating the integral over $\l$  in \eqref{LT.deriv}, we introduce the new variable $\z=\l (V(X)\Ac(\m))^{\frac{2l}{s-d+2l}}$, and after this change of variables obtain \eqref{LTsing}.
\end{proof}
\subsection{The LT estimate. The hard case.}
\label{LTproof.Sect.Hard}
Now we present the proof of Theorem \ref{Thm.d<2l}. The reasoning covers all values of $d,l,$ therefore, for $d>2l$ this is an alternative proof to the previous one.
As before,
we consider   $V\ge0$. The proof follows the structure of the one in \cite{Weidl}, \cite{NetrWeidl}; see also \cite{FLWLT}.
 By $\mb=\mb(d,l)$ we denote the dimension of the space of polynomials of degree less that $l$ in $\R^d$.

\begin{lem}\label{LemmaN} For some constants $c_0,c_1$ depending on $\g,d,l,s,$ for any cube $Q=Q_t\subset\R^d$ and any $V\ge0,$ $V\in L^{\theta}(Q)$,
\begin{equation}\label{CubeLem0}
    \Nb_-(\Hb(V\m)_Q^{\Nc}+|Q|^{-2l})=0, \, {\mathrm{if}}\, \Ac(\m)^{\frac{2\theta}{q}} |Q|^{\r}\int_Q V^{\theta}\m(dX)\le c_0
\end{equation}
and
\begin{equation}\label{CubeLem1}
\Nb_-(\Hb(V)_Q^{\Nc})\le \mb, \, {\mathrm{if}}\,\,\Ac(\m)^{\frac{2\theta}{q}} |Q|^{\r}\int_Q V^{\theta}\m(dX)  \le c_1,
\end{equation}
where
$\theta=\frac{s+2l\g}{s-d+2l},$ $q=\frac{2\theta}{\theta-1}= \frac{s+2l\g}{\frac{d}{2}-l+l\g},$ $\r=\frac{\theta}{d}[\frac{2s}{q}-d+2l]=\frac{2l\g}{d}.$
\end{lem}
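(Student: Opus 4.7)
The plan is to reduce both \eqref{CubeLem0} and \eqref{CubeLem1} to a single scaled form of Lemma~\ref{Prop.RN}. The essential algebraic fact to check first is the identity
\[
 \theta\Bigl(2l - d + \tfrac{2s}{q}\Bigr) = 2l\g,
\]
which is a direct computation from $\theta = (s+2l\g)/(s-d+2l)$ and $2/q = (\theta-1)/\theta$. This identity lets us rewrite the scaling exponent $t^{2l-d+2s/q}$ on the right of Lemma~\ref{Prop.RN} as $|Q|^{\r/\theta}$ with $t = |Q|^{1/d}$, so Lemma~\ref{Prop.RN} reads
\[
 \int_Q V|u|^2\, \m(dX) \le C_0 \Bigl[\Ac(\m)^{2\theta/q}\, |Q|^{\r}\int_Q V^{\theta}\,\m(dY)\Bigr]^{1/\theta}\Bigl(\int_Q |\nabla^l u|^2\, dX + t^{-2l}\int_Q |u|^2\, dX\Bigr).
\]
The bracketed factor is exactly the scale-invariant quantity controlled by $c_0$, $c_1$ in the hypotheses.

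For \eqref{CubeLem0}, I would write out the quadratic form of $\Hb(V\m)_Q^{\Nc} + |Q|^{-2l}$ directly; it contains the additive piece $t^{-2l}\int_Q |u|^2\, dX$ from the regularization, matching the last term on the right above. Choosing $c_0$ so that $C_0 c_0^{1/\theta} \le 1$, the master inequality dominates the negative part $\int_Q V|u|^2\m$ by the sum of the kinetic and regularizing terms. The form is therefore nonnegative, and no negative eigenvalues exist.

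For \eqref{CubeLem1} the regularizing term is absent, and its role must be played by a Poincaré inequality on a suitable subspace. The natural choice is the $L^2(Q)$-orthogonal complement of $\Pc_{l-1}$, the $\mb$-dimensional space of polynomials on $Q$ of degree less than $l$, which coincides with the null space of the Neumann realization of $(-\D)^l$. On this complement the classical higher-order Poincaré--Deny--Lions inequality yields $t^{-2l}\int_Q |u|^2\, dX \le C_P \int_Q |\nabla^l u|^2\, dX$ with $C_P$ independent of $t$ (by Rellich compactness on the unit cube followed by dilation). Combining this with the master inequality and choosing $c_1$ with $C_0(1+C_P)c_1^{1/\theta} \le 1$, the Neumann form of $\Hb(V\m)_Q^{\Nc}$ becomes nonnegative on the codimension-$\mb$ subspace $\Pc_{l-1}^{\perp}$; Glazman's variational principle then gives $\Nb_-(\Hb(V\m)_Q^{\Nc}) \le \mb$.

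The main substantive point beyond exponent bookkeeping is producing the higher-order Poincaré--Deny--Lions inequality on a cube with a scale-correct constant before inserting it into the variational argument — a standard but non-trivial fact. Everything else reduces to straightforward invocation of Lemma~\ref{Prop.RN} and a careful choice of the small constants $c_0$, $c_1$.
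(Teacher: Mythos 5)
Your proof is correct and follows essentially the same route as the paper: both reduce to the scaled Maz'ya-type embedding of Lemma~\ref{Prop.RN}, obtain \eqref{CubeLem0} by choosing $c_0$ small enough that the form of $\Hb(V\m)_Q^{\Nc}+|Q|^{-2l/d\cdot d}$ is nonnegative, and obtain \eqref{CubeLem1} by replacing the regularizing $t^{-2l}\|u\|_2^2$ term with the higher-order Poincar\'e inequality on the codimension-$\mb$ subspace orthogonal to polynomials of degree below $l$, followed by the Glazman variational principle. The exponent bookkeeping ($\theta(2l-d+2s/q)=2l\g$) and the identification of the relevant scale-invariant quantity match the paper's computation.
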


\begin{proof} By definition of $q$,  the inequality \eqref{MasIneqScaled} is valid. Using the H\"older inequality,
we obtain
\begin{gather}\label{long1}
    \int_Q|u(X)|^2V\m(dX)\le \left(\int_Q V(X)^{\theta}\m(dX)\right)^{\frac{1}{\theta}}\left(\int|u(X)|^q\m(dX)\right)^{2/q}\\\nonumber
    \le C_{\ref{long1}}\Ac(\m)^{1-\theta^{-1}}\left(\int_Q V(X)^{\theta}\m(dX)\right)^{\frac{1}{\theta}}|Q|^{1-\frac{s}{d}}\int_Q\left(|\nabla_l u|^2+|Q|^{-\frac{2l}{d}}|u|^2\right)dX.
\end{gather}
Thus, if for $Q=Q_t,$ the coefficient
\begin{equation*}
C_{\ref{long1}}\Ac(\m)^{2/q}\left(\int_Q V(X)^{\theta}\m(dX)\right)^{\frac{1}{\theta}}|Q|^{1-\frac{s}{d}}
\end{equation*}
is not greater than $1,$ or, equivalently,
 $\int_Q V(X)^{\theta}\m(dX)< \Ac(\m)^{1-\theta} |Q|^{\theta(\frac{s}{d}-1)}C_{\ref{long1}}^{-{\theta}} $,
we have
\begin{equation*}
    \int_Q|\nabla_l u|^2dX-\int_Q |u|^2 V\m(dX) \ge -|Q|^{-2\frac{l}{d}}\int_Q|u|^2dX
\end{equation*}
for all $u\in H^l(Q)$. This inequality means that operator $\Hb(V\m)_Q^{\Nc}$ has no spectrum below $-|Q|^{-\frac{2l}{d}}.$
To justify the second assertion of the Lemma, we argue similarly, but apply the Poincar$\mathrm{\acute{e}}$ inequality $|Q|^{-\frac{2l}{d}}\int_Q|u|^2dX\le C\int_Q|\nabla_l u|^2dX$ for functions $u\in H^l(Q)$ subject to $\int_Q u p(X) dX=0$ for all polynomials  $p$ of degree below $l,$ i.e., on the subspace $\tilde{H}^l(Q)$ of functions satisfying the above orthogonality condition.
Therefore, we can repeat \eqref{long1} for functions $u\in\tilde{H}^l(Q)$, omitting the second summand on the right  in the last line in \eqref{long1}, thus obtaining
\begin{equation}\label{ortog.ineq}  \int_Q|u(X)|^2V\m(dX)\le  C_{\ref{ortog.ineq}}\Ac(\m)^{2/q}\left(\int_Q V(X)^{\theta}\m(dX)\right)^{\frac{1}{\theta}} |Q|^{1-\frac{s}{d}}\int_Q(|\nabla_l u|^2)dX.
\end{equation}
Therefore, if $\int_Q V(X)^{\theta}\m(dX)\le \Ac(\m)^{-\frac{2\theta}{q}} C_{\ref{ortog.ineq}}^{-{\theta}}|Q|^{-\theta(1-\frac{s}{d})},$ we have
\begin{equation*}
    \int_Q|\nabla_l u|^2dX-\int_Q |u|^2 V\m(dX) \ge 0, \, u\in \tilde{H}^l(Q).
\end{equation*}
So, the quadratic form of the Schr\"odinger operator $((-\D)^l-V\mu)_Q^{\Nc} $ is nonnegative on a subspace of codimension $\mb(d,l)$ in $H^l(Q),$ and therefore, this operator has not more than $\mb(d,l)$ negative eigenvalues.
\end{proof}
The idea that the covering approach can produce not only eigenvalue estimates but also estimates for LT sums appeared, independently, in the thesis by T.Weidl, see the papers \cite{Weidl} and \cite{NetrWeidl}, and also in \cite{EgKon}.  In fact, it is an improved
  version of the Neumann part of the classical bracketing. We present it for our case, modifying the presentation in \cite{FLWLT}.
\begin{lem}\label{lem.bracketing} Let $\m$ be a locally finite Borel measure, $V\in L_{1,loc,\m},$ $V\ge0$  and let $\U$ be a covering of $\supp V$ by cubes, parallel to each other, such that $\U=\cup_{j\le \k} \U_j,$ and in each $\U_j$ the cubes are disjoint. Then
\begin{enumerate}
  \item for any $\l\ge0,$

  \begin{equation}\label{N.bracketing}
  \Nb_{-}(\Hb(V\m)+\l)\le C\sum_{Q\in\U}\Nb_{-}(\Hb(\k V\m)_Q^{\Nc}+\l), \end{equation}
and
  \item for $\g>0,$
  \begin{equation*}
    \Tr(\Hb(V\m)_-)^{\g}\le C\sum{\Tr((\Hb(\k V\m)_Q^\Nc)_-)^{\g}}.
  \end{equation*}
\end{enumerate}
\end{lem}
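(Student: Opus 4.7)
The plan is to combine the covering multiplicity with the direct-sum structure of Neumann forms on the cubes, and then invoke the Glazman variational principle. The central inequality I would aim for is
\begin{equation*}
\hb_{V\m}[u]+\l\,\|u\|_{L_2(\R^d)}^{2}\;\ge\;\frac{1}{\k}\sum_{Q\in\U}\bigl(\hb_{\k V\m,Q}[u|_Q]+\l\,\|u\|_{L_2(Q)}^{2}\bigr),\qquad u\in H^l(\R^d),\ \l\ge0.
\end{equation*}

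To prove this three-term bound, I would split the terms and treat them one at a time. Since each subfamily $\U_j$ is disjoint, $\sum_{Q\in\U_j}\int_Q|\nabla_l u|^2\,dX\le\int_{\R^d}|\nabla_l u|^2\,dX$; summing over $j\le\k$ yields $\sum_{Q\in\U}\int_Q|\nabla_l u|^2\,dX\le\k\int_{\R^d}|\nabla_l u|^2\,dX$, and the same argument gives $\sum_{Q\in\U}\|u\|_{L_2(Q)}^{2}\le\k\,\|u\|_{L_2(\R^d)}^{2}$. In the opposite direction, the hypothesis that $\U$ covers $\supp V$ gives $\sum_{Q\in\U}\mathbf{1}_Q(X)\ge1$ for $\m$-a.e.\ $X\in\supp V$, whence $\int V|u|^2\,d\m\le\sum_{Q\in\U}\int_Q V|u|^2\,d\m$. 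Combining these three bounds, with the factor $\k$ absorbed into the potential on the right, yields the displayed inequality.

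Next, I would apply the Glazman lemma: $\Nb_-(\Hb(V\m)+\l)$ equals the supremum of $\dim L$ over subspaces $L\subset H^l(\R^d)$ on which $\hb_{V\m}[u]+\l\,\|u\|^2<0$ for every nonzero $u\in L$. For such an $L$, consider the restriction map $R\colon u\mapsto(u|_Q)_{Q\in\U}\in\bigoplus_{Q\in\U}H^l(Q)$. The key verification is that $R$ is injective on $L$: if $Ru=0$, then $u$ vanishes on $\cup_{Q\in\U}Q\supset\supp V$, so $\int V|u|^2\,d\m=0$ and $\hb_{V\m}[u]+\l\,\|u\|^2\ge\l\,\|u\|^2\ge0$, forcing $u=0$. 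Hence $R(L)$ has the same dimension as $L$, and by the inequality above the direct-sum form $\sum_Q(\hb_{\k V\m,Q}+\l I)$ is strictly negative on $R(L)$. The variational principle for the direct sum then gives $\dim L\le\sum_Q\Nb_-(\Hb(\k V\m)_Q^{\Nc}+\l)$, which is assertion~(1) (with $C=1$). Assertion~(2) follows at once by multiplying (1) by $\g\l^{\g-1}$, integrating over $\l\in(0,\infty)$, and interchanging sum with integral by Tonelli, using $\Tr(A_-)^{\g}=\g\int_0^\infty\l^{\g-1}\Nb_-(A+\l)\,d\l$.

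The main obstacle I anticipate is the injectivity of $R$ on the negative subspace: it relies on the precise covering hypothesis $\supp V\subset\cup_{Q\in\U}Q$ (not merely $\supp u$) together with $\l\ge0$. This is exactly the point at which the improvement over classical Neumann bracketing enters, since it allows one to avoid any partition of unity at the price of the factor $\k$ multiplying $V$.
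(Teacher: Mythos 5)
Your proof is correct and follows essentially the same route as the paper: the same covering/multiplicity comparison between the global form and the sum of cube-local Neumann forms, followed by the variational principle, and then the layer-cake identity $\Tr(A_-)^\g=\g\int_0^\infty\l^{\g-1}N_-(A+\l)\,d\l$ to pass from (1) to (2). The only cosmetic difference is that the paper applies Glazman's lemma in its codimension form, intersecting the null spaces of the $L_2$-functionals generated by the negative Neumann eigenfunctions on each cube, whereas you apply the dimension form, embedding a negative subspace of the global form into $\bigoplus_{Q}H^l(Q)$ via the restriction map and verifying its injectivity; these are dual formulations of the same argument.
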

 \begin{proof} The second part of Lemma follows from the first one by means of the identity \eqref{LTthroughCLR}. To prove the first statement, we recall the variational principle (the Glazman lemma) in the 'codimension version': for an operator defined by the quadratic form $\gb[u]$ in a Hilbert space $\Hc,$ the number of negative eigenvalues equals the smallest value of codimension of the subspace $\Lc$ in a form-core for $\gb$, such that $\gb[u]\ge0$ for $u\in\Lc.$ The codimension of a subspace is  understood here  as the number of linearly independent continuous linear functionals that are annulled on $\Lc$. Therefore,  to obtain an upper bound for the number of negative eigenvalues, we need to construct a collection of such functionals.  For the operator $\Hb(V\m)+\l,$ the quadratic form is

  \begin{equation*}
  (\hb+\l)[u]=\int|\nabla_l u(X)|^2dX-\int V(X)|u(X)|^2\m(dX)+\l\int|u(X)|^2dX,
 \end{equation*}
and the inequality $(\hb+\l)[u]\ge0$  means
\begin{equation*}
   \int V(X)|u(X)|^2\m(dX)-\l\int|u(X)|^2dX\le\int|\nabla_l u(X)|^2dX
\end{equation*}

 So, let $Q$ be some cube in the covering $\U$ and $w^{Q}_{k}$, be an orthonormal system of eigenfunctions of $\Hb((\k V\m)_Q^{\Nc}+\l)$, corresponding to negative eigenvalues, continued by zero outside $Q$. Each of these functions  generates a functional $\f_{k}^Q$ in $H^l(\R^d)$, the scalar product $\f_k^Q(u)=(u,w_k^Q)$. There are $n_Q=N_-(\Hb(\k V\m)_Q^{\Nc}+\l)$ such functionals, therefore $\Lc_Q,$ the intersection of the null spaces  of these functionals has codimension $n_Q$. On this subspace, the inequality
 \begin{equation}\label{one cube}
    \int_Q V(X)|u(X)|^2\m(dX)\le\l\int_Q|u(X)|^2dX+\int_Q|\nabla_l u(X)|^2dX
 \end{equation}
 holds. 
 
 Now we consider such collections of functionals for all cubes $Q\in\U$  and set $\Lc=\cap_{Q\in\U} \Lc_Q.$ This is the space on which all functionals $\f_k^Q$ annul.
 The subspace $\Lc$ has codimension not greater than the sum of codimensions of all $\Lc_Q,$ $Q\in\U,$
 \begin{equation}\label{codim}
    \codim(\Lc)\le \sum_{Q\in \U}N_-(\Hb(\k V\m)_Q^{\Nc}+\l)).
 \end{equation}
 Now we evaluate the quadratic form $\hb(V\m)$ on $\Lc.$ We sum the inequality \eqref{one cube} over all cubes $Q\in\U:$
\begin{equation}\label{SumOfCubes}
 \sum_{Q\in\U}   \int_Q V(X)|u(X)|^2\m(dX)\le\sum_{Q\in\U}\int_Q \left(\l|u(X)|^2+|\nabla_l u(X)|^2\right)dX.
\end{equation}
For the term on the left in \eqref{SumOfCubes}, since $\U$ is a covering of $\supp V$,
\begin{equation}\label{In.1}
\sum_{Q\in\U}   \int_Q V(X)|u(X)|^2\m(dX)\ge \int_{\R^d}V(X)|u(X)|^2\m(dX).
\end{equation}
On the right in \eqref{SumOfCubes}, since $\U$ is a covering with multiplicity no greater than $\k,$ we have
\begin{equation}\label{In23}
   \sum_{Q\in\U}\int_Q \left(\l|u(X)|^2+|\nabla_l u(X)|^2\right)dX\le \k \int_{\R^d} \left(\l|u(X)|^2+|\nabla_l u(X)|^2\right)dX.
\end{equation}
We substitute \eqref{In.1}, \eqref{In23} in \eqref{SumOfCubes}, which gives
\begin{equation}\label{final sum}
    \int_{\R^d}\k^{-1}V(X)|u(X)|^2\m(dX)\le  \int_{\R^d} \left(\l|u(X)|^2+|\nabla_l u(X)|^2\right)dX.
\end{equation}
The inequality \eqref{final sum} is valid for $u\in \Lc,$ on a subspace of codimension satisfying \eqref{codim}. By the variation principle, this means that
\begin{equation}\label{finalN}
    N_-(\Hb(\k^{-1}V\m)+\l)\le \sum_{Q\in\U}N_-(\Hb(V\m)_Q^{\Nc}+\l),
\end{equation}
which is equivalent to \eqref{N.bracketing}. \end{proof}
Now we finish the proof of Theorem \ref{Thm.d<2l}.
\begin{proof}We follow the reasoning in \cite{FLWLT}. We set $A=\k^{-1}\min(c_0,c_1)$ and apply Lemma \ref{LemCov} with the function $F=V^{\theta}$ and $\a=\r.$ Thus we obtain a covering $\U$ of $\supp F$ by cubes $Q\in\U,$ parallel to each other, such that $|Q|^{\r}\int_{Q}V^{\theta}\m(dX)=A,$ $\U=\cup_{\n=1}^{\k} \U_\i$ and  each family $\U_\i$ consists of disjoint cubes. We denote by $V_j$ the restriction of $V$ to $\Mb\cap Q_j.$
From Lemma \ref{LemmaN} it follows that for each cube $Q\in\U,$ operator $\Hb(V\m)_Q^{\Nc}$ has at most $\mb$ negative eigenvalues, and these eigenvalues, if they exist, are larger than $-C|Q|^{-\frac{2l}{d}},$ therefore,
\begin{equation*}
    \Tr((\Hb(V\m)_Q^{\Nc})_-^{\g})\le C|Q|^{-2l\g/d}=C |Q|^{-\r}.
\end{equation*}
By the choice of $A$, the expression on the left-hand side is not greater than $C \Ac(\m)^{\theta-1}\int_{Q_j}V^{\theta}\m(dX).$ Thus we obtain for each of cubes in $\U:$
\begin{equation}\label{main.1}
    \Tr((\Hb(V\m)_Q^\Nc)_-^{\g})\le C\int_{Q} V^{\theta}(X)\m(dX.)
\end{equation}
Adding these inequalities, by Lemma \ref{lem.bracketing}, we arrive at \eqref{LTsing}.
\end{proof}
\section{Examples}\label{Examples} The leading example of our main Theorem is a measure on a Lipschitz surface in $\R^d.$ Such surface $\Si$, with dimension $m$ and codimension $\dF=d-m$, is locally defined by the equation $y=\fip(x)$ in proper local co-ordinates $X=(x,y)\in\R^m\times \R^\dF$ with Lipschitz $\dF-$ component vector-function $\fip.$ As $\m$ we take the natural surface measure induced by the embedding of $\Si$ into $\R^d$, represented in the above local co-ordinates as $\m(dX)=\left(\det(1+(\nabla\fip(x))^*(\nabla\fip(x))\right)^{\frac12}dx.$  This measure coincides with the $m$-dimensional Hausdorff measure on $\Si.$ We suppose that the Lipschitz constants in all local representations of $\Si$  are bounded by a common quantity $\L.$ In this case, the measure $\m$ satisfies condition \eqref{AR+} with $s=m$ locally, for small $r$, in any neighborhood where the above representation of the surface is valid. We suppose that \eqref{AR+} is satisfied for all $r>0$ with some constant $\Ac.$ This requirement imposes some regularity conditions.
In this case our Theorem \ref{Thm.d<2l} gives the following Lieb-Thirring type estimate.
\begin{exa}\label{CorLip} Let $\Si$ be a Lipschitz surface, as above, and $V(X), X\in\Si$ be a $\m$ measurable function, $V_-\in L_{\theta,\m}(\Si),$ where  $\theta=\theta_{d,m,l,\g}=\frac{m+2l\g}{m+2l-d},$ and $\g$ is a positive number as in Theorem 1.3.
Then
\begin{equation*}
    \Tr(-\D-V\m)_-^\g\le C_{d,l,\g}\Ac(\m)^{\theta-1}\int_{\Si}V(X)^{\theta}\m(dX).
\end{equation*}
\end{exa}
We present some examples of Lipschitz surfaces satisfying the above conditions.
\begin{exa}\label{ex1}\emph{A global Lipschitz graph.} Let $E\subseteq\R^{d-1}$ be a closed set and $y=\f(x), \, x\in E$ be a Lipschitz function on $E$ with Lipschitz constant $\L.$ Then the graph $\Si\in\R^d$ of the function $\f$ satisfies \eqref{AR+} with $\Ac=\o_{m-1}(1+\L^2)^{m/2},$ where $\o_{m-1}$ is the volume of the unit ball in $\R^{m-1},$ and estimate \eqref{LTsing} is valid.
\end{exa}
\begin{exa}\label{Products}\emph{Products.} Let $\Si_1\subset \R^{d_1},$ $\Si_2\subset \R^{d_1}$ be Lipschitz surfaces of dimension $m_\i,$ $\i=1,2$ with Hausforff measures $\m_\i,$  satisfying estimates of the form \eqref{AR+} of order $s_\i=m_\i$ with constants $\Ac_\i$. Consider their direct product
$\Si=\Si_1\times\Si_2\in \R^d,$ $d=d_1+d_2$. One can see that $\Si$ is a Lipshitz surface of dimension $m_1+m_2$ in $R^d$ with constant $\Ac=C\Ac_1\Ac_2.$ The estimate \eqref{LTsing} holds for $\g>0 $ for $2l\le d_1+d_2,$ $\g>1-\frac{d_1+d_2}{2l}$ for $2l>d_1+d_2$. More generally, the estimate \eqref{LTsing} holds for finite products of measures satisfying conditions of the form \eqref{AR+}.
\end{exa}
\begin{exa}\label{Cyl} \emph{Cylinders.} Let $\m_1$ be a measure in $\R^{d_1}$ satisfying a condition of the form \eqref{AR+} with exponent $s_1$. Consider the cylindrical measure $\m=\m_1\otimes \m_2$ in $\R^{d_1+d_2}$ where $\m_1$ is the Lebesgue measure in $\R^{d_2}.$ Such measure satisfies \eqref{AR+} with exponent $s=s_1+d_2.$
\end{exa}

\begin{exa}\label{Fract}{Fractal sets.} We recall the general construction of fractal sets, introduced  by J.Hutchinson, \cite{Hu}. Let $ \pmb{\Sc} = \{\Sc_1, ...\Sc_\kb\}$ be a finite collection  of contractive similitudes (i.e., compositions of a parallel shift, a linear isometry and a contracting homothety)
on $\R^{d}$, $h_1, ..., h_\kb$ are their coefficients of contraction.  We suppose that \emph{the open set condition} is satisfied: there exists an open set $\Vs\subset \R^d$
 such that $\cup \Sc_\i(\Vs)\subset \Vs$ and $\Sc_\i(\Vs)\cap \Sc_{\i'}(\Vs)=\varnothing, \i\ne \i'.$ By the results of Sect. 3.1 (3), 3.2 in \cite{Hu}, there exists a unique compact set $\K=\K(\pmb{\Sc})$ satisfying $\K=\cup_{\i\le \kb} \Sc_j\K.$ This set is, in fact,  the closure of the set of all fixed points of finite compositions of the mappings $\Sc_\i.$ The Hausdorff dimension $s$ of the set $\K(\pmb{\Sc})$ is determined by the equation $\sum h_\i^s=1.$ Let $\m$ be the $s$-dimensional Hausdorff measure $\m_{\pmb{\Sc}}$ on $\K(\pmb{\Sc})$. As explained in \cite{Fra}, Corollary 2.11.(1), p.6696, estimate \eqref{AR+} is valid for such $\m$ with exponent $s.$ Therefore, our result, Theorem \ref{Thm.d<2l}, gives the LT estimate for $\Hb_l(V\m).$
\end{exa}
\begin{exa}\emph{Lipschitz pre-images.} Let $\m$ be a singular measure in $\R^d$ satisfying \eqref{AR+}, $\Mb=\supp\m,$  $\F$ be a Lipschitz mapping of an open set $\O\subset\R^{d'}$ to a neighborhood of $\Mb.$ A measure $\m'$ in $\O$ is induced by this mapping, $\m'(E)=\m(\F(E))$. If $\L$ is the Lipschitz constant for $\F$ then the image of a ball with radius $r$ is inside a ball with radius $\L r,$ therefore, $\m'$ satisfies  condition \eqref{AR+} with the same exponent $s$ and the constant $\Ac(\m')=\L^s\Ac(\m.)$ The general results on LT estimates carry over to the measure $\m'.$
\end{exa}
\begin{exa}{Noncompact  fractals.} Let $\m$ be a fractal measure  in $\R^{d}$ with compact support, as in Example \ref{Fract}, of Hausdorff dimension $s$. Consider a lattice $\Lb$ of rank $m\le s$ in $\R^d$ and construct the measure $\m_\Lb,$ the sum of shifts of $\m$ by vectors in $\Lb$. Such measure $\m_\Lb$ satisfies \eqref{AR+} with the same value of $s.$
\end{exa}
 One can combine constructions in the above examples to obtain more measures for which the LT inequality holds.

\end{document}